\let\reftagform@=\tagform@
\def\tagform@#1{\maketag@@@{(\ignorespaces\textcolor{purple}{#1}\unskip\@@italiccorr)}}
\renewcommand{\eqref}[1]{\textup{\reftagform@{\ref{#1}}}}
\newtheorem{theorem}{Theorem}[section]
\newtheorem{corollary}[theorem]{Corollary}
\newtheorem{lem}[theorem]{Lemma}
\newtheorem{prop}[theorem]{Proposition}
\newtheorem{thm}[theorem]{Theorem}
\theoremstyle{remark}
\newtheorem{rem}[theorem]{Remark}
\newcommand{\E}{\ensuremath{\mathbb{E}}}
\newcommand{\Pro}{\ensuremath{\mathbb{P}}}
\newcommand{\indi}{\ensuremath{\textbf{1}}}
\renewcommand{\epsilon}{\varepsilon}
\newcommand{\mathdef}{\mathrel{\mathop:}=}
\newcommand{\br}{\vspace{1mm}\\}
\newcommand{\Br}{\vspace{2mm}\\}
\def\<{\langle}
\def\>{\rangle}
\numberwithin{equation}{section}
\tikzset{>=latex}
\date{\today} 
\begin{document}

\title{On the time constant of high dimensional \\ first passage percolation}
\author{Antonio Auffinger \thanks{ tuca@northwestern.edu. The research of A. A. is supported by NSF grant DMS-1597864.}  \\ \small{Northwestern University} \and Si Tang  \thanks{sitang@galton.uchicago.edu} \\ \small{University of Chicago}}\maketitle

\footnotetext{MSC2000: Primary 60K35, 82B43.}

\begin{abstract}
We study the  time constant $\mu(e_{1})$ in first passage percolation on $\mathbb Z^{d}$ as a function of the dimension. We prove that if the passage times have finite mean,
$$\lim_{d \to \infty} \frac{\mu(e_{1}) d}{\log d}  = \frac{1}{2a},$$
where $a \in [0,\infty]$ is a constant that depends only on the behavior of the distribution of the passage times at $0$.
For the same class of distributions, we also prove that the limit shape is not an Euclidean ball, nor a $d$-dimensional cube or diamond, provided that $d$ is large enough.
\end{abstract}
\maketitle
\section{Introduction and main results}
We study first passage percolation on $\mathbb Z^{d}$ for $d$ large. The model is defined as follows. We place a non-negative random variable $\tau_e$, called the passage time of the edge $e$, at each nearest-neighbor edge in $\mathbb Z^d$. The collection $(\tau_e)$ is assumed to be independent, identically distributed with common distribution $F$. 

 A {\it path} $\gamma$ is a finite or infinite sequence of nearest neighbor edges in $\mathbb Z^d$ such that each two consecutive edges in the sequence intersect. For any finite path $\gamma$ we define the passage time of $\gamma$ to be
\[
T(\gamma)=\sum_{e \in \gamma} \tau_e.
\]
Given two points $x,y \in \mathbb{Z}^d$ one then sets
\begin{equation*}\label{definition:passagetime}
T(x,y) = \inf_{\gamma} T(\gamma),
\end{equation*}
where the infimum is over all finite paths $\gamma$ that start at the point $x$ and end at $y$. For a review and the current state of the art of the model, we invite the readers to see the recent notes \cite{FPPSurvey} or the classical paper of Kesten \cite{Aspects}.

Here we focus on the large $d$ behavior of the time constant and limit shape of the model. These are defined as follows. Let $e_{1}, \ldots, e_{d}$ be the coordinate vectors of $\mathbb Z^{d}$.

The time constant $\mu(e_1) \in [0,\infty)$ is defined as 
\begin{equation*}
 \mu(e_1) = \lim_{n \rightarrow \infty} \frac{T(0,ne_1)}{n} \quad \text{ a.s. and in } L^1.
\end{equation*}
If $\E \tau_{e} < \infty$, $\mu(e_{1})$ exists. See \cite[Theorem 2.1]{FPPSurvey} and the discussion therein. 

For each $t \geq 0$ let 
\[
B(t) = \{y \in \mathbb{R}^d~:~T(0,[y])\leq t\} ,
\]
where $[y]$ is the unique point in $\mathbb Z^{d}$ such that $y\in [y] + [0,1)^{d}$. The pair $(\mathbb{Z}^d,T(\cdot, \cdot))$ is a pseudo-metric space and $B(t) \cap \mathbb {Z}^d$ is the (random) ball of radius $t$ around the origin. 
The limit shape is defined by the famous shape theorem as follows.

Assume
\begin{equation}\label{eq:conditionmomentsLS}
\E \min \{ t_1^d, \ldots, t_{2d}^d\} <\infty,
\end{equation}
where $t_i, i=1, \ldots 2d$, are independent copies of $\tau_e$ and  
\begin{equation}\label{eq:conditionmomentsLS2}
F(0)<p_c(d),
\end{equation} where $p_c(d)$ is the threshold for bond percolation in $\mathbb Z^d$. We write $rS = \{rs~:~s \in S\}$ for any subset $S \subseteq \mathbb{R}^d$ and $r\in \mathbb{R}$.

\begin{theorem}[Cox and Durrett \cite{CoxDurrett}]
If \eqref{eq:conditionmomentsLS} and \eqref{eq:conditionmomentsLS2} hold, then the first passage percolation model has a limit shape. That is, there exists a deterministic, convex, compact set $\mathcal B$ in $\mathbb{R}^d$, such that for each $\epsilon>0$,
\begin{equation*}\label{eq:limitshapeeq}
\mathbb{P}\left( (1-\epsilon)\mathcal B \subset \frac{B(t)}{t} \subset (1+\epsilon) \mathcal B \text{ for all } t \text{ large}\right) = 1.
\end{equation*}
Moreover, the limit shape $\mathcal B= \{ x \in \mathbb R^{d}: \mu(x) \leq 1 \}$ has a non-empty interior and is symmetric about the axes of $\mathbb R^{d}.$
\end{theorem}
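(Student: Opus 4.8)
The plan is the classical three-part route: construct the candidate norm $\mu$ on $\mathbb R^d$, check it is genuinely a norm, and then promote directionwise convergence of $T(0,\cdot)$ to the uniform convergence the shape theorem demands.

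\emph{Constructing $\mu$.} Fix $x\in\mathbb Z^d$ and consider the array $T(mx,nx)$, $0\le m\le n$, which is stationary and subadditive under the lattice translations because $(\tau_e)$ is i.i.d. Before invoking Kingman's subadditive ergodic theorem one needs $\E\,T(0,x)<\infty$; this is not obvious when $\tau_e$ is heavy-tailed, but it follows from \eqref{eq:conditionmomentsLS} by building a path from $0$ to $x$ that at each step is free to take any of several short edge-disjoint local detours -- the expected cost of one such step is finite precisely under \eqref{eq:conditionmomentsLS} -- giving $\E\,T(0,x)\le C\|x\|_1$ for a dimensional constant $C$. Kingman then yields $\mu(x):=\lim_n n^{-1}T(0,nx)$, a.s.\ and in $L^1$, for every $x\in\mathbb Z^d$. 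Homogeneity over $\mathbb N$ extends $\mu$ to $\mathbb Q^d$; the triangle inequality $T(0,x+y)\le T(0,x)+T(x,x+y)$ gives subadditivity $\mu(x+y)\le\mu(x)+\mu(y)$; and the invariance of $F$ under coordinate permutations and sign flips gives the corresponding symmetries. Together with the Lipschitz bound $\mu(x)\le C\|x\|_1$ this extends $\mu$ to a symmetric seminorm on $\mathbb R^d$.

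\emph{$\mu$ is a norm and $\mathcal B$ is a body.} Here \eqref{eq:conditionmomentsLS2} enters. Since $F(0)<p_c(d)$ the edges of zero weight do not percolate, and a standard contour/large-deviation argument upgrades this to the existence of $\alpha,c>0$ with $\Pro(T(0,y)\le\alpha\|y\|_1)\le e^{-c\|y\|_1}$ for all $y$, whence by Borel--Cantelli $\mu(x)\ge\alpha\|x\|_1$. Thus $\alpha\|x\|_1\le\mu(x)\le C\|x\|_1$, so $\mathcal B=\{x:\mu(x)\le1\}$ is convex (a sublevel set of a seminorm), compact, symmetric about the axes, and sandwiched between the Euclidean balls of radii $1/C$ and $1/\alpha$; in particular it has nonempty interior. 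Once the convergence below is established it forces the limit set to equal $\{\mu\le1\}$ exactly.

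\emph{Uniform convergence and the main obstacle.} It suffices to prove
\[
\frac1n\sup_{\|y\|_1\le n}\bigl|T(0,y)-\mu(y)\bigr|\longrightarrow 0\quad\text{a.s.},
\]
since then, using $\alpha\|y\|_1\le\mu(y)\le C\|y\|_1$, the monotonicity of $t\mapsto B(t)$, and a routine interpolation in $t$, one obtains $(1-\epsilon)\mathcal B\subset B(t)/t\subset(1+\epsilon)\mathcal B$ for all large $t$. The lower bound $T(0,y)\ge\mu(y)-\epsilon\|y\|_1$ (uniformly, eventually) follows from the exponential estimate above via a union bound over the $O(n^d)$ points of the box. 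The upper bound is the delicate half: take a finite net of rational directions, fine enough; along each net ray the subadditive theorem already gives $n^{-1}T(0,\cdot)\to\mu(\cdot)$ (after replacing the target by the nearest lattice point), and for general $y$ one writes $y=z+w$ with $z$ on the nearest net ray, so $|T(0,y)-\mu(y)|\le|T(0,z)-\mu(z)|+T(z,z+w)+\mu(w)$, and the crux is to bound, uniformly over the relevant pairs inside the box, a bridging cost $\max\bigl\{T(u,u+w):\|u\|_1\le 2n,\ \|w\|_1\le\delta n\bigr\}\le\epsilon n$ eventually a.s., with $\epsilon$ made small by choosing the net fine and $\delta$ small. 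This is where \eqref{eq:conditionmomentsLS} is indispensable and \emph{sharp}: the finite $d$-th moment of the least of the $2d$ passage times at a vertex is exactly what makes the union bound -- over the $\sim n^d$ vertices of a box, run along $n=2^j$ -- summable, so Borel--Cantelli controls the local escape costs used to assemble the bridging paths. Combining the three terms and letting $\epsilon\downarrow0$ finishes the proof. The genuine obstacle is precisely these two linear-in-$\|x\|_1$ bounds on $T(0,\cdot)$ (in expectation, to run Kingman; and uniformly over a box, for the bridging): they are subtle because geodesics must route around rare bad edges rather than follow a fixed path, and extracting them under the sharp hypothesis \eqref{eq:conditionmomentsLS} -- via detour constructions whose cost is governed by the $d$-th moment of the cheapest local edge, matched against the $\sim n^d$ vertices of a box -- is the technical heart of Cox and Durrett's argument; everything else (subadditivity, symmetries, geometry of norm balls, the reductions in $t$) is routine.
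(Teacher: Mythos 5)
The paper offers no proof of this statement: it is quoted verbatim as a known theorem of Cox and Durrett (extended by Kesten to general $d$), so there is no in-paper argument to compare yours against. Your sketch follows the standard route for that classical result --- Kingman's theorem along lattice rays, extension of $\mu$ to a symmetric homogeneous subadditive functional on $\mathbb R^d$, positivity from $F(0)<p_c(d)$ via Kesten's exponential estimate, and a finite net of directions plus a bridging bound to upgrade radial to uniform convergence --- and it correctly isolates the technical heart, namely the two linear-in-$\|x\|_1$ controls on $T(0,\cdot)$ and the role of \eqref{eq:conditionmomentsLS} in making a union bound over the $\sim n^d$ vertices of a box summable.

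Two imprecisions are worth flagging. First, the uniform lower bound $T(0,y)\ge \mu(y)-\epsilon\|y\|_1$ does \emph{not} follow from the estimate $\Pro\bigl(T(0,y)\le\alpha\|y\|_1\bigr)\le e^{-c\|y\|_1}$ together with a union bound: that estimate only yields $T(0,y)\ge\alpha\|y\|_1$ eventually, which says nothing about being within $\epsilon\|y\|_1$ of $\mu(y)$ when $\mu(y)$ is much larger than $\alpha\|y\|_1$. The lower direction requires the same net-and-bridging mechanism as the upper one (use $T(0,z)\le T(0,y)+T(y,z)$ with $z$ a nearby net point, or the usual compactness-and-contradiction argument); Kesten's exponential bound is needed only to make $\mu$ a genuine norm, hence $\mathcal B$ compact. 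Second, finiteness of $\E\,T(0,x)$ for Kingman needs only $\E\min\{t_1,\dots,t_{2d}\}<\infty$ (minimize over $2d$ edge-disjoint bounded-length detours), not the full $d$-th moment in \eqref{eq:conditionmomentsLS}; the $d$-th moment is consumed by the box union bound, so the word \emph{precisely} is misplaced there. Neither point is fatal for a sketch, but the first misattributes which tool closes which half of the argument.
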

\noindent Despite the importance of both objects, our knowledge on $\mu(e_{1})$ and $\mathcal B$ is almost non-existent. Finding a distribution where one can explicitly determine $\mu(e_{1})$  is considered a difficult open task, and so is deriving further properties of $\mathcal B$. The main purpose of this paper is to investigate such questions for $d$ large.

We assume that the passage times have finite mean, 
\begin{align}
\label{eq:dist122}
\E \tau_{e}=\int_{0}^{\infty} x dF(x) < \infty
\end{align}
and the existence of some constant $a \in [0,\infty]$ such that,
\begin{align}
\label{eq:dist2}
\bigg | \frac{\Pro(\tau_{e} \le x)}{x} - a \bigg | \le C\cdot |\log x|^{-1},
\end{align}
for some $C > 0$ in some interval $[0,\epsilon_{0}]$, $\epsilon_{0} >0$. Here, we understand $a=\infty$ as

\begin{equation}\label{eq:todayisFriday}
 \lim_{x \to 0} \frac{\Pro(\tau_{e} \le x)}{x} = \infty. 
\end{equation}

Our first main result is the asymptotic behavior of $\mu(e_{1})$ as a function of $d$.

\begin{theorem} \label{thm:Thisisthefirsttheorem} Assume \eqref{eq:dist122}  and that  \eqref{eq:dist2} holds for some $a \in [0,\infty]$. Then the time constant satisfies

\begin{equation}\label{eq:limitofthetimeconstant}
\lim_{d\to \infty}\frac{\mu(e_{1})d}{\log d} = \frac{1}{2a}.
\end{equation}

\end{theorem}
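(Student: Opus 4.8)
The plan is to prove matching upper and lower bounds on $\mu(e_1) d / \log d$, both converging to $1/(2a)$. The key heuristic is that to travel from $0$ to $ne_1$ one should look for edges with very small passage times. At each vertex there are $2d$ incident edges, and by \eqref{eq:dist2} the probability that a given edge has passage time below $x$ is roughly $ax$ for small $x$. So among the $\approx 2d$ forward-available edges from any vertex, the cheapest one has passage time of order $1/(2ad)$, but one cannot always move in the $e_1$-direction for free: a detour strategy must be used. The correct scale turns out to be that the per-step cost is $\approx \log d/(2ad)$, reflecting the fact that one pays roughly the cost of finding an edge of weight $x$ with $ax \approx 1/d$, iterated with a logarithmic correction coming from the need to make progress in a fixed direction while exploring a $d$-regular neighbourhood.

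\textbf{Upper bound.} I would construct an explicit (greedy/multi-step) path from $0$ to $n e_1$ and estimate its expected passage time, then invoke subadditivity to pass to $\mu(e_1)$. The natural construction: partition the journey into blocks; in each block, from the current vertex explore the $\approx d$ coordinate directions orthogonal to the ones already used, looking for an edge of passage time at most $x_d := (\log d)^{1+o(1)}/(2ad)$ (or the appropriate threshold making $2d \cdot \Pro(\tau_e \le x_d)$ slightly larger than $\log d$, so that such an edge exists with overwhelming probability), take two such cheap edges to make net progress of $+1$ in the $e_1$ coordinate (one step out along a cheap orthogonal direction, one step back is not progress — so really: alternate so that the concatenation of cheap ``orthogonal hops'' plus occasional forced $e_1$-steps nets out to linear displacement). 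One shows the expected cost per unit of $e_1$-displacement is at most $(1+o(1)) \log d/(2ad)$. When $a = \infty$ one chooses $x_d \to 0$ fast enough that $2d \Pro(\tau_e \le x_d) \gg \log d$ while $x_d \log d \to 0$, using \eqref{eq:todayisFriday}, giving $\mu(e_1) d/\log d \to 0 = 1/(2a)$.

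\textbf{Lower bound.} Here one must show no path can do substantially better, i.e. $\mu(e_1) \ge (1-o(1))\log d/(2ad)$. The idea is a first-moment / entropy argument: any path from $0$ to $ne_1$ has length at least $n$, and if its passage time is less than $cn\log d /(2ad)$ with $c<1$, then it uses ``too many'' cheap edges. Count: the number of self-avoiding paths of length $L$ from $0$ is at most $(2d)^L$, and the probability that a fixed path of length $L$ has total weight $\le t$ is at most (by a Chernoff/Markov bound on $\sum \tau_e$ truncated, using that $\Pro(\tau_e \le x) \le (a + C/|\log x|)x$) something like $(e t / L)^L \cdot (2a)^L \cdot (\text{correction})$. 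Balancing $(2d)^L \cdot (\text{this probability}) < 1$ forces $t \gtrsim L \log(2d)/(2ae) $-type bounds; optimizing over $L \ge n$ and being careful with the $|\log x|^{-1}$ correction (which is where the constant $a$ rather than some other constant enters sharply) yields $t \ge (1-o(1)) n \log d/(2ad)$. One then needs to handle the contribution of edges that are \emph{not} small (where \eqref{eq:dist2} says nothing), but these only help make paths more expensive, so truncating all passage times at $\epsilon_0$ from above gives a lower bound and is legitimate.

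\textbf{Main obstacle.} The delicate point is matching the \emph{constant} $1/(2a)$ exactly on both sides, rather than just getting the order $\log d / d$. This requires the error term $C|\log x|^{-1}$ in \eqref{eq:dist2} to be propagated carefully through the optimization over the path length $L$ and the truncation level $x_d$: the relevant $x_d$ is of order $\log d/(2ad) \to 0$, so $|\log x_d| \sim \log d$, and the correction $C/|\log x_d| \sim C/\log d \to 0$ is exactly what makes the effective rate converge to $a$ and not to something like $a + C$. Getting the two-sided large-deviations estimate for $\sum_{e \in \gamma} \tau_e$ sharp enough (a local CLT / precise Chernoff bound for sums of truncated heavy-near-zero variables), uniformly over the relevant range of $L$, is the technical heart of the argument. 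The case $a = \infty$ and the case $a = 0$ (where one must show $\mu(e_1) d/\log d \to \infty$, i.e. the time constant is \emph{larger} than order $\log d/d$) require separate but easier treatments, the latter because $\Pro(\tau_e \le x)/x \to 0$ means cheap edges are genuinely rare and the path is forced to pay more.
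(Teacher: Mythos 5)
Your proposal has genuine gaps in both directions, and in each case the missing ingredient is the actual technical content of the theorem. For the upper bound, the greedy/block construction you describe cannot reach the sharp constant. If at each vertex you look for an edge of weight at most $x_d$ with $2d\,\Pro(\tau_e\le x_d)\approx\log d$, such an edge exists with probability $1-O(1/d)$, but it points in an essentially uniform direction among the $2d$ coordinates; the chance that a cheap edge in the specific direction $+e_1$ exists is only about $\tfrac{\log d}{2d}$, so "one cheap hop per unit of $e_1$-progress" is not a high-probability event, and your parenthetical "one step out, one step back is not progress" is exactly the unresolved problem: a greedy walker behaves like a near-uniform random walk and needs order $d$ steps to advance its first coordinate by one. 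The sharp bound is an existence statement about one good path among the $\approx(2d)^{\log d}$ paths of length $\lfloor\log d\rfloor$ ending in $H_1$, each of whose $\log d$ edges costs about $\tfrac{1}{2ad}$; it is proved by a second-moment method on the count $N_{n,x}$ of such self-avoiding paths with $T(\gamma)\le x=\tfrac{\log d}{2(1-\delta)ad}$. The hard part, absent from your sketch, is showing $\E N_{n,x}^2\le A(\E N_{n,x})^2$, which requires counting pairs of self-avoiding walks sharing exactly $l$ edges (the "bubble" decomposition of Proposition \ref{prop:mainprop} and Lemma \ref{lem:main}), and then a boosting step over the $d-p-1$ unused coordinate directions to upgrade the resulting positive probability to an actual bound on $\E\tilde s_{0,1}\ge\mu(e_1)$.

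For the lower bound, your first-moment computation as written does not produce the $\log d$. With the naive count $(2d)^L$ of paths of length $L$ and $\Pro(S_L\le t)\le\big(\tfrac{eat}{L}\big)^L(1+o(1))^L$, the condition $(2d)^L\big(\tfrac{eat}{L}\big)^L<1$ is $t<\tfrac{L}{2dea}$, which at the binding value $L=n$ yields only $\mu(e_1)\gtrsim\tfrac{1}{ad}$ — no logarithm appears, and your claimed "$t\gtrsim L\log(2d)/(2ae)$" does not follow from this balance. The essential missing input is the refined combinatorial bound (Lemma \ref{lem:KestenCombinatorialBound}, from Kesten) on the number $\mathcal N_{k,n}$ of $k$-step lattice paths from $0$ to the hyperplane $H_n$, namely $(2d)^k\exp\big(-n\rho+\tfrac{k}{d}(\cosh\rho-1)\big)$, which encodes the entropic cost of achieving net $e_1$-displacement $n$. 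Only after inserting this, optimizing $\rho=\log(2dn/k)$ and then maximizing over $z=k/n$ (the dangerous paths have length $k\approx n\log d$, not $k=n$) does one obtain summability of $\Pro(b_n\le nx)$ precisely for $x<\tfrac{\log d}{2ad}$, whence the constant $\tfrac12$. Your treatment of the correction $C|\log x|^{-1}$ and of the degenerate cases $a=0,\infty$ (which the paper handles by stochastic domination and the van den Berg--Kesten comparison) is fine in spirit, but without the two ingredients above the proof does not close.
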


We now put the theorem above  into historical context. The behavior of the time constant $\mu(e_{1})$ as a function of $d$ was considered before by Kesten \cite[Section 8]{Aspects} and Dhar \cite{Dhar}. Their assumptions on the distribution of the passage times are special cases of ours. First, in \cite{Aspects}, under \eqref{eq:dist122}, \eqref{eq:dist2} and the additional assumptions that $a\in (0,\infty)$, $C=o(1)$ as $x\to 0$ and $\tau_{e}$ has a density around the origin, Kesten showed the existence of $\epsilon>0$ so that 
\begin{equation}\label{eq:SiisdrawingaHistogram}
\frac{\epsilon}{a} < \liminf_{d\to \infty} \frac{\mu(e_{1})d}{\log d}  \leq \limsup_{d\to \infty} \frac{\mu(e_{1})d}{\log d} \leq \frac{11}{a}.
\end{equation}

Second, in \cite{Dhar}, Dhar established \eqref{eq:limitofthetimeconstant} in the case of exponentially distributed passage times. Dhar's proof however cannot be adapted to any other distribution as, for instance, it heavily relies on the Markovian property of the ball $B(t)$. Thus, Theorem~\ref{thm:Thisisthefirsttheorem} says that the asymptotics obtained by Dhar are valid under rather general assumptions, that include those of Kesten.

\begin{rem}
Hypothesis \eqref{eq:dist122} is a natural condition on the behavior of the distribution of the passage times at $0$. It is satisfied by a large collection of examples; for instance, it includes all distributions that have a continuous density near the origin. The bound with $|\log x|^{-1}$ is a weaker condition than any polynomial bound around $0$.
\end{rem}
\begin{rem}
 If the distiribution has a mass at $0$ then \eqref{eq:todayisFriday} clearly holds. In this case, Theorem 6.1 in \cite{Aspects} and the fact that the critical probability $p_{c}(d)$ for bond percolation in $\mathbb Z^{d}$ decreases to $0$ as $d$ goes to infinity imply that the sequence $(\mu(e_{1}))_{d\geq 1}$ will be eventually constant equal to $0$ and, of course, \eqref{eq:limitofthetimeconstant} holds. 
\end{rem}

A word of comment is needed here. If, for some $\delta>0$, the support of the distribution of the passage times is included in $(\delta,\infty)$, then it is clear that \eqref{eq:limitofthetimeconstant} must hold. Indeed, in this case, $a =0$ and $\mu(e_{1}) \geq \delta$ for all $d$. It would be interesting to further study the behavior of $\mu(e_{1})$ in this situation. As we will see, this question seems to be related to the typical length (number of edges) of a geodesic and the behavior of $p_{c}(d)$ as a function of $d$.

Our second main result excludes the $d$-dimensional Euclidean ball $\mathsf B:=\{ x \in \mathbb R^{d}: \|x \|_{2} \leq \mu(e_{1})^{-1} \} $,  cube $\mathsf C:=\{ x \in \mathbb R^{d}: \|x\|_{\infty}\leq \mu(e_{1})^{-1} \}$  and  diamond  $\mathsf D:=\{ x \in \mathbb R^{d}:  \|x\|_{1}\leq \mu(e_{1})^{-1} \} $ as possible limit shapes. Note that due to convexity we always have  $\mathsf D \subseteq \mathcal B \subseteq  \mathsf C$. 

\begin{theorem} \label{thm:YellowChocobo} For any distribution satisfying \eqref{eq:conditionmomentsLS} and \eqref{eq:conditionmomentsLS2} for all $d \geq 2$, and \eqref{eq:dist122}, \eqref{eq:dist2} with $a\in (0,\infty)$, there exists $d_{0}  \geq 1$ such that for any $d \geq d_{0}$,
$$\mathsf D \subsetneq \mathcal B \subsetneq \mathsf C \text{ and } \mathcal B \neq \mathsf B.$$  

\end{theorem}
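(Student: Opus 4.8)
The plan is to reduce all three statements to two-sided control of the diagonal time constant. Write $\mathbf 1:=e_1+\cdots+e_d$. I claim it suffices to prove that there are constants $0<c_a\le C_a<\infty$, depending only on $a$, such that
\[
c_a \;\le\; \mu(\mathbf 1) \;\le\; C_a \qquad\text{for all sufficiently large } d .
\]
Granting this and Theorem~\ref{thm:Thisisthefirsttheorem} (so $\mu(e_1)\sim \log d/(2ad)$, hence $\mu(e_1)\to 0$, $\sqrt d\,\mu(e_1)\to 0$, and $d\,\mu(e_1)\to\infty$), the three conclusions are obtained by evaluating the norm $\mu$ along the diagonal and using the inclusions $\mathsf D\subseteq\mathcal B\subseteq\mathsf C$. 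First, $p:=\mu(e_1)^{-1}\mathbf 1\in\mathsf C$ has $\mu(p)=\mu(e_1)^{-1}\mu(\mathbf 1)\ge c_a/\mu(e_1)>1$ for $d$ large, so $p\notin\mathcal B$ and $\mathcal B\subsetneq\mathsf C$. Second, $q:=(d\,\mu(e_1))^{-1}\mathbf 1\in\partial\mathsf D$ has $\mu(q)=(d\,\mu(e_1))^{-1}\mu(\mathbf 1)\le C_a/(d\,\mu(e_1))<1$ for $d$ large, so $q$ lies in the interior of $\mathcal B$ while lying on $\partial\mathsf D$, whence $\mathsf D\subsetneq\mathcal B$. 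Third, if $\mathcal B=\mathsf B$ then the norms $\mu(\cdot)$ and $\mu(e_1)\|\cdot\|_2$ would have the same unit ball and therefore be equal, forcing $\mu(\mathbf 1)=\sqrt d\,\mu(e_1)\to 0$, contradicting $\mu(\mathbf 1)\ge c_a$. Thus everything rests on the estimate for $\mu(\mathbf 1)$.

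For the lower bound $\mu(\mathbf 1)\ge c_a$ I would use a Chernoff union bound over self-avoiding paths. A path from $0$ to $n\mathbf 1$ has at least $nd$ edges, and for a self-avoiding path $\gamma$ of length $L$ one has $\Pro(T(\gamma)\le t)\le e^{\lambda t}\phi(\lambda)^L$, where $\phi(\lambda)=\E e^{-\lambda\tau_e}=\lambda\int_0^\infty F(x)e^{-\lambda x}\,dx$. Since $a\in(0,\infty)$, \eqref{eq:dist2} gives $F(x)\le 2ax$ on some interval $[0,\delta]$, so $\phi(\lambda)\le 2a/\lambda+e^{-\lambda\delta}$; taking $\lambda=16ad$ makes $2d\,\phi(\lambda)\le \tfrac12$ once $d$ is large. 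Bounding the number of length-$L$ self-avoiding paths from $0$ by $(2d)^L$,
\[
\Pro\bigl(T(0,n\mathbf 1)\le t\bigr)\;\le\; e^{16adt}\sum_{L\ge nd}\bigl(2d\,\phi(16ad)\bigr)^{L}\;\le\; 2\,e^{\,16adt-nd\log 2},
\]
which is summable in $n$ whenever $t\le (\log 2)\,n/(32a)$; Borel--Cantelli then gives $\mu(\mathbf 1)\ge (\log 2)/(32a)=:c_a$ for all large $d$.

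The main obstacle is the upper bound $\mu(\mathbf 1)\le C_a$, i.e.\ the fact that the diagonal time constant stays bounded as $d\to\infty$; note the naive greedy monotone path from $0$ to $n\mathbf 1$ only yields $\mu(\mathbf 1)\lesssim (\log d)/a$, which is useless since $d\,\mu(e_1)\sim (\log d)/(2a)$. By Fekete's lemma $\mu(\mathbf 1)=\inf_m \E T(0,m\mathbf 1)/m$, so it is enough to exhibit a path witnessing $\E T(0,m\mathbf 1)\le (\log d)/a' + m/c'$ for every $m\ge 1$, with $a',c'>0$ absolute; then $\mu(\mathbf 1)\le 1/c'$. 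For this I would analyze the Richardson-type growth of $B(t)$ from the origin, in the spirit of the proof of Theorem~\ref{thm:Thisisthefirsttheorem}: an ``ignition'' phase of duration of order $(\log d)/a$ is required for $B(t)$ to reach $\ell^{1}$-distance $d$ from $0$ (equivalently, to attain size of exponential order in $d$), after which $B(t)$ expands quickly, its $\ell^{1}$-radius growing at speed of order $d$ — uniformly in the dimension — essentially because a large, spread-out subset of $\mathbb Z^{d}$ has on the order of $d$ outgoing edges at each boundary site. Consequently $m\mathbf 1$, which lies at $\ell^{1}$-distance $md$, is absorbed by time $O\bigl((\log d)/a\bigr)+O(m)$, which gives the stated bound. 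The delicate point, and where I expect the real difficulty to lie, is to make the linear-growth step rigorous — to control the shape and the edge boundary of $B(t)$ uniformly in $d$ — but this should reuse, or closely parallel, the path constructions already developed for Theorem~\ref{thm:Thisisthefirsttheorem}.
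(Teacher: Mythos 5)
Your reduction of all three statements to a two-sided bound $c_a\le\mu(\mathbf 1)\le C_a$ on the diagonal is exactly the paper's strategy (the paper phrases it via the diagonal constant $\mu^{\ast}=\mu(\mathbf 1)/\sqrt d$, comparing it with $\mu(e_1)$ and $\sqrt d\,\mu(e_1)$), and your lower bound via a Chernoff estimate and a union bound over the at most $(2d)^L$ paths of length $L\ge nd$ is correct. It is cruder than the paper's Theorem 5.1, which counts paths reaching the diagonal hyperplane more carefully to extract the constant $\sqrt{\alpha_\ast^2-1}/(2a)$, but any positive constant suffices here, and your argument is self-contained. Note that the lower bound alone already yields $\mathcal B\subsetneq\mathsf C$ and $\mathcal B\ne\mathsf B$; the upper bound is needed only for $\mathsf D\subsetneq\mathcal B$.

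That upper bound $\mu(\mathbf 1)\le C_a$ is the genuine gap: you correctly identify it as the remaining obstacle and then do not prove it --- the ``ignition phase plus linear Richardson growth'' sketch is left entirely heuristic, as you acknowledge. Moreover, the reason you abandon the elementary route is a miscalculation. The adaptive greedy monotone path --- at each vertex take the outgoing edge of minimal weight among the $d$ positive coordinate directions --- does \emph{not} give $(\log d)/a$; it gives a constant. The $d$ edges inspected at step $i$ all have lower endpoint of $\ell^1$-norm $i$, hence are disjoint from every previously inspected edge, so conditionally on the history each step costs an independent copy of $Y=\min\{t_1,\dots,t_d\}$ and $\E T(0,n\mathbf 1)= nd\,\E Y$ (this is \eqref{HelloSI!HowAreYOU} in the paper). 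Under \eqref{eq:dist2} with $a>0$,
\[
\E Y=\int_0^\infty\Pro(\tau_e>t)^d\,dt\le\int_0^{\delta}\bigl(1-a(1-\epsilon)t\bigr)^d\,dt+O(\rho^d)\le \frac{c}{d},
\]
with $c$ depending only on the distribution, whence $\mu(\mathbf 1)\le\inf_n\E T(0,n\mathbf 1)/n\le c\asymp 1/a$ uniformly in $d$. Substituting this one-line computation for your growth heuristic closes the gap; as written, your proof of $\mathsf D\subsetneq\mathcal B$ is incomplete.
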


\begin{rem}
We prove Theorem \ref{thm:YellowChocobo} by showing that the intersection of $ \mathcal B$ with the line $\ell_{d}:=\{\lambda (1,\ldots,1):\lambda \in \mathbb R \}$ is strictly contained in $\mathsf B$ and strictly  contains $\mathsf D \cap \ell_{d}$. Using symmetry around $\ell_{d}$ the proof may further exclude other possible limit shapes. We delay the proof until Section 5.
\end{rem}

\begin{rem}
One of the main features of the Theorem above is that $d_{0}$ can be explicitly estimated for any given values of $a$ and $C$ in \eqref{eq:dist2}. In the case of an exponential random variable or a uniform random variable on some interval $[0,s]$, we show that $d_{0} = 269,000$ is sufficient  (but  certainly not optimal). We exclude the $d$-dimensional diamond for all $d\geq 110$. See Appendix.
\end{rem}

The fact that limit shape is not an Euclidean ball is expected to hold for all $d \geq 2$. Kesten provided the first results in this direction. He showed (see \cite[Remark 8.5]{Aspects}) that this is the case for the exponential distribution if $d \geq 10^{6}.$ 

\begin{rem}
In \cite{MR2753301}, Couronn\'{e}, Enriquez and Gerin also considered FPP with exponential distributed passage times. They provided a constructive way to find an upper bound of order $\log d/d $ for $\mu(e_{1})$. They also claimed that the limit shape is not an Euclidean ball if $d \geq 35$. However, the argument presented in \cite[Corollary 4]{MR2753301} seems unclear to us. Their claim is obtained using numerical results provided in Table $1$ of Dhar \cite{Dhar}. It is unclear whether the inequality $\mu(35) \leq 0.93 \frac{\log 2d}{2d}$  appearing in \cite{MR2753301} implies $\mu(d) \leq C(d) \frac{\log 2d}{2d}$ for some $C(d)$ that leads to the result for $d>35$. In view of \eqref{eq:limitofthetimeconstant}, the constant $C(d)$ must approach $1$ as $d \to \infty$, even if the numbers appearing in Table 1 in \cite{Dhar} are monotonically decreasing.
\end{rem}

The rest of the paper is organized as follows. In Section \ref{sec:ub}, we will sketch the proof of Theorem \ref{thm:Thisisthefirsttheorem}. The two following sections are devoted to prove the bounds $ {\limsup_{d\to \infty} \frac{\mu(e_{1})ad}{\log d} \leq 1/2}$ and $  \liminf_{d\to \infty} \frac{\mu(e_{1})ad}{\log d} \ge 1/2$, respectively. In Section \ref{sec:Kuppo}, we prove Theorem \ref{thm:YellowChocobo} by deriving a lower bound for the time constant in the diagonal direction. In the last section, we provide the quantitative bounds to control $d_{0}$.  Throughout the paper, we use $e_{1}, e_{2}, \ldots, e_{d}$ to denote the canonical base vectors of $\mathbb Z^{d}$.

\subsection*{Acknowledgments}
Both authors thank the American Institute of Mathematics for their support during the workshop ``First passage percolation and related models'', in August 2015, where this project was initiated. Si Tang thanks the Department of Mathematics at Northwestern University for the hospitality during her visits.

\section{Proof Strategy of Theorem \ref{thm:Thisisthefirsttheorem}}\label{sec:strategy}
The proof strategy is motivated by \cite{Aspects}. The reader will see that the upper bound 
\begin{equation}\label{SiiswearingRedtoday}
{\limsup_{d\to \infty} \frac{\mu(e_{1})ad}{\log d} \leq 1/2}
\end{equation}
contains the most technical part and the main new ideas of the paper. The lower bound 
$$\liminf_{d\to \infty} \frac{\mu(e_{1})ad}{\log d} \ge 1/2 $$
follows closely from \cite{Aspects} and is less intricate. In this section we explain how to derive \eqref{SiiswearingRedtoday} and the differences between our approach and Kesten's proof of \eqref{eq:SiisdrawingaHistogram}.

First, it is known (see for instance \cite[pp.246]{Aspects}) that
\[
\mu(e_{1}) \le \E \tilde s_{0,1}
\]
where 
\begin{align*}
\tilde s_{0, n} &\mathdef \inf \left\{
\begin{array}{c}
T(\gamma): \gamma \text{ is a path from }(0, 0, \ldots, 0)\text{ to some point in } H_{n}\\
\text{which, except for its {\em final} point, is contained in } [0, 1) \times \mathbb R^{d-1}
\end{array}
\right\}\\
H_{n}&\mathdef\{(x_{1}, \ldots, x_{d}) \in \mathbb Z^{d}: x_{1}=n\}.
\end{align*}
We will derive the upper bound for $\mu(e_{1})$ by bounding $\E \tilde s_{0,1}$ from above. The idea is to look for a path of length $n=n(d)$ to $H_{1}$ that has a very small passage time, and it is contained in a subspace $\mathcal H$ of dimension $p=p(d) < d$. To prove the existence of such a favorable path, the second moment method is a natural approach.  In \cite{Aspects},  Kesten  took $p=\lfloor d/2\rfloor$ and considered directed paths whose first $(n-1)$ steps go along the positive directions $+e_{2}, \ldots, +e_{p+1}$ and then, at the last step, take the $e_{1}$ direction to reach $H_{1}$. As a result, these paths are necessarily self-avoiding, which allows an estimate of the passage time using the sum of i.i.d. random variables. Since these paths only take the positive directions of $+e_{2}, \ldots, +e_{p+1}$, no more than $\lfloor d/2\rfloor^{(n-1)}$ paths were considered.\Br
Kesten's proof was then a trade-off between examining a large collection of  paths and being able to estimate $\tilde s_{0,1}$ using sum of i.i.d. random variables.  His strategy led to the upper bound in \eqref{eq:SiisdrawingaHistogram}. In order to get an optimal upper bound, we explore a subspace that is almost as large as the entire $\mathbb Z^{d}$ by choosing $p=d-o(d)$. Furthermore, we allow paths to go along any of the $2p$ possible directions, $\pm e_{2}, \ldots, \pm e_{p+1}$. Under such setting, we are able to examine nearly all of the paths leading to $H_{1}$ and obtain the optimal upper bound. The price to pay is that now some paths will be self-intersecting and thus we can not approximate their passage times by a sum of i.i.d.'s. Furthermore, the computation in the second moment method becomes elaborate. In the end of the day, the price is affordable as in high dimension, the majority of random walk paths are self-avoiding. The main estimation is done by carefully counting patterns of overlapping segments for a given pair of random walks in $\mathbb Z^{d}$. This main step is done is Section \ref{sec:kdosadkasokpojigorhughrurejioef}.

\section{Proof of \eqref{SiiswearingRedtoday} \label{sec:ub}}
\subsection{Setup} We are interested in the self-avoiding paths of length $n$ from 0 to $H_{1}$ whose first $(n-1)$ steps use directions $\pm e_{2}, \ldots, \pm e_{p+1}$ and the last step is $e_{1}$. Denote by $\mathcal P_{n}$ the set of all such paths.  For $\gamma \in \mathcal P_{n}$, we write it as
\[
\gamma = (S_{0}=0, S_{1}, S_{2}, \ldots, S_{n}),
\]
where $S_{k} \in \mathbb Z^{d}$ such that (i) $S_{i}\ne S_{j}$ whenever $i \ne j$, (ii) $S_{k}-S_{k-1} \in \{\pm e_{2}, \ldots, \pm e_{p+1}\}$ for $1\le k\le n-1$, and (iii) $S_{n}-S_{n-1}=e_{1}$.
Let $N_{n, x}$ be the number of paths $\gamma \in \mathcal P_{n}$ such that $T(\gamma) \le x$. We choose 
\begin{equation}
\label{eq:choicenxp}
n =\lfloor \log d \rfloor,\quad x = \frac{\log d}{2(1-\delta)a d},\quad p=d-\left \lfloor \frac{d}{\delta^{1+\eta} \log d}\right \rfloor
\end{equation}
for some $\delta, \eta>0$ fixed, but we will eventually send $\delta$ to 0. 
\subsection{First moment of $N_{n, x}$}
By definition, $\E N_{n, x}$ can be written as
\begin{align}
\notag
\E N_{n, x} &= \sum_{\gamma \in \mathcal P_{n}} \Pro(T(\gamma) \le x) =|\mathcal P_{n}|\cdot \Pro(S_{n} \le x)
\end{align}
We will need the following two lemmas to estimate $|\mathcal P_{n}|$ and  $\Pro(S_{n}\le x)$. The first one is about the number of self-avoiding walks, for which the estimate has been improved over the years \cite{Kesten:1964, Hammersley:1963, Madras:2013em} .
\begin{lem} \label{lem:sawsrw} Let $C_{n, d}$ denote the number of $d$-dimensional self-avoiding walks of length $n$. 
\begin{enumerate}[\normalfont (a)]
\item  $\xi_{d} \mathdef \lim_{n\to \infty} C_{n, d}^{1/n}$ exists and 
$
\xi_{d} \ge 2d-1-\log (2d-1)
$ for all $d \ge 1$.
\item 
As $d\to \infty$, $\xi_{d}$ has the following expansion
\[
\xi_{d} = 2d-1-\frac{1}{2d} - \frac{3}{(2d)^{2}} + O\left(\frac{1}{(2d)^{3}}\right).
\]
\end{enumerate}
\end{lem}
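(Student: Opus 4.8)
The plan is to obtain Lemma~\ref{lem:sawsrw} from the self-avoiding-walk literature, reproducing the short classical arguments and citing the one genuinely hard input.

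First, for the existence of $\xi_{d}$ in part~(a) I would invoke submultiplicativity. Any self-avoiding walk of $n+m$ steps is the concatenation of a self-avoiding walk of length $n$ (its first $n$ steps) with a translate of a self-avoiding walk of length $m$ (its last $m$ steps), and the resulting map into ordered pairs is injective, so $C_{n+m,d}\le C_{n,d}\,C_{m,d}$. Hence $(\log C_{n,d})_{n\ge1}$ is subadditive, and Fekete's lemma gives that $\xi_{d}=\lim_{n}C_{n,d}^{1/n}=\inf_{n}C_{n,d}^{1/n}$ exists. The bounds $d^{\,n}\le C_{n,d}\le 2d(2d-1)^{n-1}$ --- the lower one from strictly monotone walks (steps among $+e_{1},\dots,+e_{d}$ only, automatically self-avoiding), the upper one from forbidding immediate backtracks --- then give $d\le\xi_{d}\le 2d-1$.

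For the lower bound $\xi_{d}\ge 2d-1-\log(2d-1)$ I would cite Kesten \cite{Kesten:1964}. The mechanism is to compare self-avoiding walks with the $2d(2d-1)^{n-1}$ non-reversing walks of length $n$: every self-avoiding walk is non-reversing, and one shows that the proportion of non-reversing walks of length $n$ that are self-avoiding is at least $\big(1-\tfrac{\log(2d-1)}{2d-1}\big)^{n}$, equivalently that the per-step multiplicative cost of self-avoidance is at most $1-\tfrac{\log(2d-1)}{2d-1}$; this is obtained by decomposing a non-reversing walk at its successive self-intersections (each closing a loop of length $\ge4$) and controlling the resulting renewal sum, the slack being comfortable since the true cost is only $O(1/d)$. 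For the small values of $d$ where $2d-1-\log(2d-1)<d$, the bound is anyway subsumed by $\xi_{d}\ge d$ from the monotone walks above.

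Finally, for part~(b) the plan is simply to invoke the $1/(2d)$-expansion of the connective constant; this is the one step I would not reprove. The correction from $2d-1$ down to $\xi_{d}$ is generated by short self-intersection loops: heuristically, a non-reversing walk must forbid one of its $2d-1$ continuations precisely when its last three steps sum to a unit vector (three sides of a square whose fourth side would close it), an event of asymptotic probability $\tfrac1{2d}$, which produces the leading term $-\tfrac1{2d}$; the $-\tfrac3{(2d)^{2}}$ term comes from the next-order loop patterns, and all remaining configurations are absorbed into $O\big((2d)^{-3}\big)$. Rigorously this is carried out either via Kesten's combinatorial $1/d$-expansion or, more systematically, via the Hara--Slade lace expansion; I would cite the textbook account in \cite{Madras:2013em}, together with \cite{Hammersley:1963} for the earlier, coarser estimates. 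This is also the main obstacle: what makes it non-elementary is the \emph{uniform} control of the $O((2d)^{-3})$ remainder --- showing that the aggregate contribution of all loop configurations beyond the finitely many explicitly enumerated short ones is genuinely of lower order --- which is exactly the bookkeeping that the lace expansion is built to deliver.
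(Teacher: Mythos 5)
Your proposal is correct and matches the paper's treatment: the paper gives no proof of this lemma at all, simply citing \cite{Kesten:1964, Hammersley:1963, Madras:2013em} for these standard facts about self-avoiding walks, which are exactly the sources (Fekete/submultiplicativity for existence, Kesten's non-reversing-walk comparison for the lower bound, and the $1/d$-expansion of the connective constant for part (b)) that you invoke.
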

\begin{lem}  \label{lem:kesten8.8}Let $X_{1}, X_{2}, \ldots, $ be i.i.d. nonnegative random variables, satisfying \eqref{eq:dist2}.
Let $S_{n}\mathdef \sum_{i=1}^{n}X_{i}$ be the partial sum. Then, for all $n\ge 1$ and $0\le x \le \epsilon_{0}$, there is
\[
\frac{(ax)^{n}}{n!}(1-C|\log x|^{-1})^{n} \le \Pro(S_{n}\le x) \le \frac{(ax)^{n}}{n!}(1+C|\log x|^{-1})^{n}
\]
\end{lem}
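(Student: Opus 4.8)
The plan is to prove both inequalities simultaneously by induction on $n$, the case $n=1$ being hypothesis \eqref{eq:dist2} itself. Throughout I take $a\in(0,\infty)$, the range in which the stated two-sided bound is meaningful (for $a\in\{0,\infty\}$ the identity as written degenerates and those cases are handled separately). Write $G(t)\mathdef\Pro(X_{1}\le t)$. Since $a<\infty$, \eqref{eq:dist2} forces $G(t)\le at(1+C|\log t|^{-1})\to0$ as $t\downarrow0$, so $\Pro(X_{1}=0)=0$ and $X_{1}$ has no atom at the origin. Shrinking $\epsilon_{0}$ if necessary — which is harmless, since \eqref{eq:dist2} continues to hold on any smaller interval with the same $C$ — we may assume $\epsilon_{0}<e^{-C}\wedge1$; on $(0,\epsilon_{0}]$ the map $t\mapsto|\log t|^{-1}$ is then increasing and $1-C|\log t|^{-1}\in(0,1)$. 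After dividing \eqref{eq:dist2} by $a$ (at worst replacing $C$ by $C/a$, a relabeling I suppress) the hypothesis becomes the two-sided linear bound
\[
at(1-C|\log t|^{-1})\ \le\ G(t)\ \le\ at(1+C|\log t|^{-1}),\qquad 0\le t\le\epsilon_{0},
\]
which is precisely the assertion for $n=1$.

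For the inductive step, fix $n\ge2$ and $0<x\le\epsilon_{0}$ and condition on $X_{n}$, so that $\Pro(S_{n}\le x)=\int_{0}^{x}\Pro(S_{n-1}\le x-t)\,dG(t)$ (the contribution at $t=x$ is $\Pro(S_{n-1}=0)=0$). Apply the inductive bound at argument $x-t\in[0,x]$ and use monotonicity of $s\mapsto|\log s|^{-1}$ to replace $|\log(x-t)|^{-1}$ by $|\log x|^{-1}$; for the upper bound this gives
\[
\Pro(S_{n}\le x)\ \le\ \frac{a^{n-1}(1+C|\log x|^{-1})^{n-1}}{(n-1)!}\int_{0}^{x}(x-t)^{n-1}\,dG(t).
\]
Now integrate by parts in the Stieltjes integral: since $(x-t)^{n-1}$ is continuous and vanishes at $t=x$ (here $n\ge2$) and $G(0)=0$, the boundary terms drop and $\int_{0}^{x}(x-t)^{n-1}\,dG(t)=(n-1)\int_{0}^{x}(x-t)^{n-2}G(t)\,dt$. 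Substituting the pointwise bound $G(t)\le at(1+C|\log t|^{-1})\le at(1+C|\log x|^{-1})$ and the elementary identity $\int_{0}^{x}t(x-t)^{n-2}\,dt=x^{n}/(n(n-1))$, the right-hand side collapses to $a(1+C|\log x|^{-1})x^{n}/n$, and collecting factors the $(n-1)$'s cancel to leave exactly $\Pro(S_{n}\le x)\le(ax)^{n}(1+C|\log x|^{-1})^{n}/n!$. This closes the induction. The lower bound follows from the identical sequence of steps with every inequality reversed; the only extra input is that $1-C|\log t|^{-1}$ and $1-C|\log x|^{-1}$ stay nonnegative and that the former (evaluated at the smaller argument) dominates the latter — both guaranteed by $\epsilon_{0}<e^{-C}$.

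I do not expect a genuine obstacle; the lemma is essentially an exercise in bookkeeping. If I had to name a crux it is keeping the error factor exactly $(1\pm C|\log x|^{-1})^{n}$ uniformly in $n$: a naive bound such as $\Pro(S_{n}\le x)\le\Pro(X_{1}\le x)^{n}$ overshoots by a factor $n!$, and it is the integration-by-parts step — converting the integral against $dG$ into one against $G(t)\,dt$, after which the linear bound on $G$ and the Beta-type identity finish the job — that restores the $1/n!$. Morally, were $X_{1}$ to have a density bounded above and below near $0$ by $a(1\pm C|\log x|^{-1})$, convolution would bound the density of $S_{n}$ by $a^{n}(1\pm C|\log x|^{-1})^{n}x^{n-1}/(n-1)!$ and integrating would give the claim; the integration by parts is exactly what makes this run from the distributional hypothesis \eqref{eq:dist2} alone. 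Secondary points to watch are the direction of monotonicity of $|\log\cdot|^{-1}$ (forcing the innocuous reduction to $\epsilon_{0}<e^{-C}$), the vanishing of the Stieltjes boundary terms (equivalently, the absence of an atom of $X_{1}$ at $0$, automatic for $a<\infty$), and the single rescaling of $C$ by $1/a$.
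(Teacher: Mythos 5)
Your proof is correct and is essentially the argument the paper invokes: the paper's one-line proof just defers to Kesten's Lemma 8.8, whose content is exactly this induction via the convolution $\Pro(S_n\le x)=\int_0^x\Pro(S_{n-1}\le x-t)\,dG(t)$, the monotonicity of $t\mapsto|\log t|^{-1}$, integration by parts against $G$, and the Beta-type identity $\int_0^x t(x-t)^{n-2}\,dt=x^n/(n(n-1))$. The two caveats you flag --- relabeling $C$ as $C/a$ so the hypothesis reads $at(1\pm C|\log t|^{-1})$, and restricting to $x\le e^{-C}$ so that $1-C|\log x|^{-1}\ge 0$ in the lower bound --- are imprecisions in the paper's statement rather than gaps in your argument, and are harmless in the regime $x\to 0$ where the lemma is used.
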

\begin{proof} The result follows from \eqref{eq:dist2} and a similar calculation as in \cite[Lemma 8.8]{Aspects}.
\end{proof}

\noindent By Lemma \ref{lem:sawsrw}(a) and sub-additivity \cite[pp.9]{Madras:2013em},  we know that 

\[
[2p-1-\log(2p-1)]^{n-1} \le \xi_{p}^{n-1}\le |\mathcal P_{n}| \le (2p)^{n-1}.
\]
Also, Lemma \ref{lem:kesten8.8} and Stirling's formula imply
\[
\frac{(axe)^{n}(1-C|\log x|^{-1})^{n}}{n^{n}e\sqrt{ n}} \le \Pro(S_{n}\le x) \le \frac{(axe)^{n}(1+C|\log x|^{-1})^{n}}{n^{n}\sqrt{2\pi n}}.
\]
Putting them together, we get
\[
\frac{[2p-1-\log(2p-1)]^{n-1}(axe)^{n}(1-C|\log x|^{-1})^{n}}{n^{n}e\sqrt{ n}} \le \E N_{n,x} \le \frac{(2paxe)^{n}(1+C|\log x|^{-1})^{n}}{2pn^{n}\sqrt{2\pi n}}.
\]
We look at the left side. Firstly, note that $x \to 0$ as $d \to \infty$. In particular, 
\[
|\log x| = \log (2(1-\delta)a ) + \log d - \log \log d = \log d(1+o(1)).
\]
Hence, with $n = \lfloor \log d\rfloor$ and for all $d$ large, we have
\begin{align*}
(1-C|\log x|^{-1})^{n} = e^{n\log (1-C|\log x|^{-1})}=e^{-Cn|\log x|^{-1} +O(n |\log x|^{-2}) } > e^{-2C}.
\end{align*}
This gives a lower bound for $\E N_{n, x}$ for all $d$ sufficiently large:
\begin{align}
\notag
\E N_{n,x} & \ge \frac{e^{-2C}\left(\frac{2p-1-\log(2p-1)}{2d(1-\delta)}\right)^{\log d}}{\frac{2p-1-\log(2p-1)}{d}e\sqrt{\log d}} \ge \frac{e^{-2C}d^{\log \frac{2p-1-\log(2p-1)}{2d(1-\delta)} }}{2e\sqrt{ \log d}} \\
\label{eq:1stmom}
&\ge \frac{ e^{-2C}d^{\log \frac{1 - \frac{1}{d}\left\lfloor \frac{d}{\delta^{1+\eta}\log d}\right \rfloor-\frac{1 + \log(2d)}{2d}}{1-\delta}}}{2e \sqrt{\log d}}.
\end{align}
For $\delta \in (0, 1)$ fixed and $d$ large, we will have $\delta > \frac{1}{d}\left\lfloor \frac{d}{\delta^{1+\eta}\log d}\right \rfloor + \frac{1+\log(2d)}{2d} $, which implies $\frac{1 -  \frac{1}{d}\left\lfloor \frac{d}{\delta^{1+\eta}\log d}\right \rfloor-\frac{1+\log (2d)}{2d}}{1-\delta} > 1$. Hence, $\E N_{n,x} \to \infty$ as $d\to \infty$. 
\subsection{Proof of the Upper Bound}
By definition, the second moment of $N_{n, x}$ can be written as
\begin{align*}
\E N_{n, x}^{2} &= \sum_{\gamma, \gamma'\in \mathcal P_{n}}  \Pro(T(\gamma) \le x, T(\gamma') \le x).
\end{align*}
Suppose for now that we are able to show that for some $0<A < \infty$, there is 
\begin{equation}
\label{eq:momineq}
\E N_{n, x}^{2} \le A (\E N_{n, x})^{2} 
\end{equation}
for all $d$ large.
By Cauchy-Schwarz inequality, we know
\[
\Pro(N_{n, x} \ge 1) \ge \frac{(\E N_{n, x})^{2}}{\E N_{n, x}^{2}} \ge \frac{1}{A} > 0.
\]
This means with positive probability, we can find a path $\gamma \in \mathcal P_{n}$ from $0$ to $H_{1}$ such that $T(\gamma) < x$. The proof of \eqref{eq:momineq} will be given in Section 2.5.\Br
Let $\mathcal H$ be the subspace spanned by $\pm e_{2}, \ldots, \pm e_{p+1}$. 
Now we focus on the coordinates $e_{p+2}, e_{p+3}, \ldots, e_{d}$. For $ p+2 \le j \le d$, let $E_{j}$ be the event  that there exists a path from $e_{j}$ to $H_{1}$ such that, except for its final point, is contained in $[0, 1) \times \mathbb R^{d-1} \cap (\mathcal H + e_{j})$ and has $T(\gamma) \le x$.  By translation invariance and \eqref{eq:momineq}, we have 
\[
\Pro(E_{j}) \ge 1/A > 0.
\] 
Choose $y=\frac{\delta\log d}{ad}$. For $ p+2 \le j \le d$, let $F_{j}$ be the event $\{\tau_{e_{j}} \le y\} \cap E_{j}$. By Lemma \ref{lem:kesten8.8},
\[
\Pro(F_{j}) = \Pro(\tau_{e_{j}} \le y) \Pro(E_{j}) \ge \frac{1}{A}\Pro(\tau_{e_{j}} < y) \ge \frac{1}{A}ay(1-C|\log y|^{-1}) \ge \frac{\delta \log d}{Ad} (1+o(1)).
\]
Furthermore, these $F_{j}$'s are independent, and if any of the events $F_{j}$ happens, we will have
\[
\tilde s_{0,1} \le y + x = \frac{\log d}{2ad}\left (2\delta + \frac{1}{1-\delta}\right).
\]
Therefore,
\begin{align}
\notag
\E\tilde s_{0,1} &\le (y+x) \E \mathbf 1_{\bigcup_{j=p+2}^{d}F_{j}} + \E \tau_{e_{1}} \E \mathbf 1_{\bigcap_{j=p+2}^{d}F^{c}_{j}} \\
\label{eq:finalcal}
&\le \frac{\log d}{2ad}\left (2\delta + \frac{1}{1-\delta}\right) + \left(1-\frac{\delta \log d}{Ad} (1+o(1))\right)^{d-p-1}\E \tau_{e_{1}}.
\end{align}
Notice that, for all $d$ sufficiently large,
\[
\left(1-\frac{\delta \log d}{Ad} (1+o(1))\right)^{d-p-1} \le 2\left(1-\frac{\delta \log d}{Ad} (1+o(1))\right)^{\left \lfloor \frac{d}{\delta^{1+\eta}\log d}\right \rfloor} \le 4e^{-\frac{1}{A\delta^{\eta}}}.
\]
Thus, for any $\eta>0$, the second term in \eqref{eq:finalcal} vanishes as $\delta \to 0$.  This gives us $\E \tilde s_{0,1} \le \frac{\log d}{ 2ad}$ as desired.
\subsection{Second moment of $N_{n,x}$} \label{sec:kdosadkasokpojigorhughrurejioef}

We are going to prove \eqref{eq:momineq} in this section. We first rewrite the second moment according to the number $l\le n$ of overlapping edges between $\gamma$ and $\gamma'$:
\begin{align}
\label{eq:2mom1}
\E N_{n, x}^{2} &= \sum_{l=0}^{n} \sum_{\gamma, \gamma' \in \mathcal P_{n}} \Pro(T(\gamma)\le x, T(\gamma')<x)\mathbf 1_{\{|\gamma \cap \gamma'|=l\}}.
\end{align}
Note that since we only consider $\gamma, \gamma'\in \mathcal P_{n}$, which are self-avoiding,  the condition $\{|\gamma \cap \gamma'|=l\}$ is defined with no ambiguity to be the number of edges in $\gamma$ that also appear in $\gamma'$ (or vice versa). In what follows, we always write 
\begin{equation}
\label{eq:gammas}
\gamma = (S_{0}=0, S_{1}, \ldots, S_{n-1}, S_{n}),\quad
\gamma' = (S_{0}'=0, S_{1}', \ldots, S_{n-1}', S_{n}').
\end{equation} 
When $l=n$, due to the fact that they both start from the origin and are self-avoiding, we know $\gamma = \gamma'$. In this case we have
\[
\sum_{\gamma, \gamma' \in \mathcal P_{n}} \Pro(T(\gamma)\le x, T(\gamma')<x)\mathbf 1_{\{|\gamma \cap \gamma'|=n\}} = \Pro(T(\gamma)\le n)\cdot |\mathcal P_{n}| = \E N_{n,x}. 
\]
When $l=0$, $\gamma$ and $\gamma'$ do not share any edges.
\[
\sum_{\gamma, \gamma' \in \mathcal P_{n}} \Pro(T(\gamma)\le x, T(\gamma')<x)\mathbf 1_{\{|\gamma \cap \gamma'|=0\}} \le \Pro(S_{n} \le x)^{2}\cdot |\mathcal P_{n}| ^{2} = (\E N_{n,x})^{2}.
\]
For other $1 \le l \le n-1$, we can write,
\begin{align*}
\Pro( T(\gamma) \le x, T(\gamma') \le x)\mathbf 1_{\{|\gamma \cap \gamma'|=l\}}  &\le \Pro(T(\gamma \setminus (\gamma\cap \gamma')) \le x, T(\gamma') \le x) \mathbf 1_{\{|\gamma \cap \gamma'|=l\}}  \\
&\le \Pro(S_{n-l}\le x) \Pro(S'_{n} \le x) \mathbf 1_{\{|\gamma \cap \gamma'|=l\}}.
\end{align*}
Lemma \ref{lem:kesten8.8} implies that for all $1\le l \le n-1$ and $d$ large,
\begin{align}
\label{eq:Aconst1}
\frac{\Pro(S_{n-l} \le x)}{\Pro(S_{n}\le x)} &\le \frac{\frac{(ax)^{n-l}}{(n-l)!} (1+C|\log x|^{-1})^{n-l}}{\frac{(ax)^{n}}{n!} (1-C|\log x|^{-1})^{n}} \le \left(\frac{n}{ax}\right)^{l}\left(1+\frac{2C|\log x|^{-1}}{1-C|\log x|^{-1}}\right)^{n}\\
\notag
& \le \left(\frac{n}{ax}\right)^{l}e^{n \log \left[1+\frac{2C}{|\log x|-C}\right]}\le \left(\frac{n}{ax}\right)^{l}e^{\frac{2nC}{|\log x|-C}} = \left(\frac{n}{ax}\right)^{l}e^{f_{a,C}(\delta, d)},
\end{align}
where we denote 
\begin{equation}
\label{eq:Aconst1}
f_{a,C}(\delta, d) \mathdef \frac{2nC}{|\log x|-C} = \frac{2 C \log d }{\log[2(1-\delta)ad] - \log\log d-C}.
\end{equation}
Note, for fixed $a, C, \delta > 0$, $f_{a,C}(\delta, d) \to 2C$ as $d\to \infty$. 
Hence, for sufficiently large $d$, $f_{a,C}(\delta, d) < 3C$, and \eqref{eq:2mom1} becomes
\begin{align*}
\E N_{n,x}^{2} &\le \E N_{n,x} + (\E N_{n, x})^{2}+e^{3C}\Pro^{2}(S_{n} \le x) \sum_{l=1}^{n-1} \sum_{\gamma, \gamma' \in \mathcal P_{n}} \left(\frac{n}{ax}\right)^{l} \mathbf 1_{\{|\gamma \cap \gamma'|=l\}}\\
&\le  \E N_{n,x} + (\E N_{n, x})^{2}+e^{3C}\Pro^{2}(S_{n} \le x) \sum_{l=1}^{n-1}  \left(\frac{n}{ax}\right)^{l} \#\{(\gamma, \gamma'): \gamma, \gamma' \in \mathcal P_{n}, |\gamma \cap \gamma'|=l\}\\
&= \E N_{n,x} + (\E N_{n, x})^{2}+ e^{3C} [(2p)^{n-1}\Pro(S_{n} \le x) ]^{2}\sum_{l=1}^{n-1}  \left(\frac{n}{ax}\right)^{l} \frac{\# \{(\gamma, \gamma'): \gamma, \gamma' \in \mathcal P_{n}, |\gamma \cap \gamma'|=l\}}{(2p)^{2(n-1)}}.
\end{align*}
Note that $\E N_{n, x} \to \infty$ and the front factor of the second term satisfies, for $d$ large,
\begin{align*}
1\le \frac{(2p)^{n-1}\Pro(S_{n} \le x)}{\E N_{n,x}} &\le \left(1+\frac{1+\log(2p-1)}{2p-1-\log(2p-1)}\right)^{n-1}\le e^{g_{\eta}(\delta, d)},
\end{align*}
where we have denoted the exponent by 
\begin{align}
\notag
g_{\eta}(\delta, d) &= \frac{(n-1)(\log(2p-1) + 1)}{2p-1-\log(2p-1)} \\
\label{eq:Aconst2}
&\le \frac{(\log d)^{2}+ \log2\log d }{2d (1-1/(\delta^{1+\eta}\log d)) -1 - \log 2d}.
\end{align}
For $\delta, \eta>0$ fixed, and $d$ sufficiently large, $g_{\eta}(\delta, d) \to 0$ as $d \to \infty$. Hence, we can choose $d$ sufficiently large, such that $e^{g_{\eta}(\delta, d)} < 2$, which yields 
\[
\E N_{n,x}^{2} \le [\E N_{n,x}]^{2}\left\{ 1 + 4e^{3C}  \sum_{l=1}^{n-1}  \left(\frac{n}{ax}\right)^{l}\frac{\# \{(\gamma, \gamma'): \gamma, \gamma' \in \mathcal P_{n}, |\gamma \cap \gamma'|=l\}}{(2p)^{2(n-1)}} + o(1)\right\}. 
\]
To proceed, we need the following proposition. 
\begin{prop}\label{prop:mainprop}For each $1 \le l\le n-1$ fixed and $p$ sufficiently large, 
\begin{equation}
\label{eq:lintersect}
\frac{\# \{(\gamma, \gamma'): \gamma, \gamma' \in \mathcal P_{n}, |\gamma \cap \gamma'|=l\}}{(2p)^{2(n-1)}} \le \left(\frac{1}{2p}\right)^{l} (1+o(p^{-1/2})).
\end{equation}
\end{prop}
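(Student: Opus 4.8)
The plan is to count pairs $(\gamma,\gamma')\in\mathcal P_n\times\mathcal P_n$ with exactly $l$ shared edges by first fixing the combinatorial ``pattern'' of the overlap and then counting how many pairs realize each pattern. Observe that the $l$ shared edges of $\gamma$ break $\gamma$ into a collection of maximal shared segments separated by maximal private segments; since both walks are self-avoiding and start at $0$, each such configuration is described by: (i) which of the first $n-1$ steps of $\gamma$ lie on shared segments versus private segments — this is a choice of at most $\binom{n-1}{l}\le 2^{n-1}=d^{O(\log 2)}$ subsets, times an orientation bookkeeping for how the shared segments of $\gamma$ match up (in position and direction, possibly reversed) with those of $\gamma'$, a factor bounded by $l!\,2^l$ or so, i.e. again $n^{O(n)}$ which is sub-polynomial in $d$ after we divide; and (ii) once the pattern is fixed, the actual embedding. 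The key point is that the $l$ overlapping edges of $\gamma'$ are \emph{determined} by $\gamma$ together with the pattern, so $\gamma'$ has only $n-1-l$ free steps (plus its forced last step $e_1$), each with at most $2p$ choices, while $\gamma$ has at most $(2p)^{n-1}$ choices. This gives a crude bound of the form $\#\{\cdots\} \le (2p)^{n-1}\cdot(2p)^{n-1-l}\cdot(\text{pattern count})$, and dividing by $(2p)^{2(n-1)}$ produces the desired $(2p)^{-l}$, \emph{provided} the pattern count is $1+o(p^{1/2})$ — which it is not, on the nose, since the pattern count is a function of $n=\lfloor\log d\rfloor$ only.

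Making the error term genuinely $1+o(p^{-1/2})$ rather than merely $(2p)^{-l}\cdot n^{O(n)}$ requires a more careful accounting: one should not overcount the number of free steps of $\gamma'$. More precisely, I would argue that if $\gamma$ and $\gamma'$ share $l$ edges arranged in $k$ maximal common segments, then $\gamma'$ is obtained from $\gamma$ by choosing the starting points and directions of these $k$ segments along $\gamma'$ plus $n-1-l$ genuinely free steps, and the constraint that the result be a self-avoiding walk passing through $0$ only \emph{reduces} the count. The cleanest route: show that for each fixed $l$, the number of \emph{patterns} (which does depend on $n$, hence on $d$) is still negligible after division because the dominant contribution comes from $k=1$, i.e.\ a single shared segment, and patterns with $k\ge 2$ common segments cost an extra factor $(2p)^{-(k-1)}$ in the count (each extra segment forces an extra near-return of the walk, which in $\mathbb Z^d$ happens with probability $O(1/p)$ per segment). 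Thus
\[
\frac{\#\{(\gamma,\gamma'):|\gamma\cap\gamma'|=l\}}{(2p)^{2(n-1)}} \le \sum_{k\ge 1}\frac{C_k(l,n)}{(2p)^{l+k-1}} = \frac{1}{(2p)^l}\Big(C_1(l,n) + O(1/p)\Big),
\]
and the heart of the matter is to show $C_1(l,n)=1+o(p^{-1/2})$, i.e.\ that a single common segment of length $l$ can essentially only be attached in the ``trivial'' way up to a correction from the finitely many ways the two walks can diverge and re-converge at its two endpoints.

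The main obstacle, as the authors themselves flag in Section 2, will be the bookkeeping in step (ii): precisely quantifying, for a single common segment, the number of ways $\gamma'$ can leave and later rejoin $\gamma$, and showing this is $1+O(1/\sqrt p)$ — the $\sqrt p$ (rather than $p$) presumably entering because a walk of length $\le n = O(\log d)$ has $O(n^2)$ pairs of steps that could ``almost collide,'' and forcing a near-collision costs a factor $O(1/p)$, while the relevant comparison is against paths that avoid \emph{themselves}, whose count $\xi_p^{\,n-1}$ already differs from $(2p)^{n-1}$ by a factor $(1-O(1/p))^{n}=1-O(n/p)=1-o(p^{-1/2})$ since $n=O(\log d)=o(p^{1/2})$. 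So the proof must (a) enumerate overlap patterns and peel off the $k=1$ main term, (b) bound the $k\ge2$ patterns by a geometric series in $1/(2p)$, and (c) control the self-avoidance and near-collision corrections uniformly for $l\le n-1$, using $n=\lfloor\log d\rfloor$ crucially so that all $n$-dependent factors are absorbed into $1+o(p^{-1/2})$.
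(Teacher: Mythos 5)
Your plan is essentially the paper's own argument: the paper decomposes by the number $K$ of maximal shared segments, extracts the main term $1$ from the $K=1$ trivially attached configuration, bounds the near-collision corrections for $K=1$ by $O(n^2/p)$ via the return-probability estimate $\Pro(S_m=t)\le \frac{1}{2p}$, and shows each additional segment (``bubble'') costs a further factor $\frac{1}{2p}$ against a pattern count of order $(2ln^2)^K$, all of which is absorbed into $o(p^{-1/2})$ because $n=O(\log p)$ --- exactly your items (a), (b), (c). The one piece of bookkeeping you gloss over is the sub-case in which the forced final $e_1$ step is itself one of the $l$ shared edges (so only $l-1$ overlaps occur among the first $n-1$ steps but the endpoints $S_{n-1}=S_{n-1}'$ must coincide); the paper handles this separately in Lemma 3.4(ii) with the same bubble machinery, and your framework accommodates it without new ideas.
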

\noindent Given Proposition \ref{prop:mainprop}, we have
\begin{align*}
\E N_{n,x}^{2} &\le  [\E N_{n,x}]^{2} \left [2+8e^{3C} \sum_{l=0}^{n-1}\left(\frac{n}{2pax}\right)^{l}\right]\\
&\le [\E N_{n, x}]^{2}\left [ 2+ 8e^{8C} \sum_{l=0}^{\infty} \left(\frac{1-\delta}{1-1/(\delta^{1+\eta}\log d) }\right)^{l}\right].
\end{align*}
Again, for $\delta > 0$ fixed, and $d$ sufficiently large, we have $\frac{1-\delta}{1-1/(\delta^{1+\eta}\log d)} < 1$, hence the summation above converges, which gives \eqref{eq:momineq}. The proof for the upper bound is then complete once we show \eqref{eq:lintersect}.\Br
When $\gamma, \gamma' \in \mathcal P_{n}$ and $|\gamma\cap \gamma'|=l$, there are two cases: (i) if $S_{n-1}\ne S'_{n-1}$, it is necessary that all $l$ overlapping edges occur in the first $(n-2)$ steps since both $\gamma$ and $\gamma'$ take the $e_{1}$ direction at the last step; (ii) if $S_{n-1} = S'_{n-1}$, then $\gamma$ and $\gamma'$ share the last edge so that there are at most $(l-1)$ overlapping edges in their first $(n-1)$ steps. Observe that the denominator of \eqref{eq:lintersect} is just the number of all pairs of paths in $\mathbb Z^{p}$ of length $(n-1)$, starting from the origin.  Hence, if we put uniform measure on all pairs of simple random walk paths $(\tilde \gamma, \tilde \gamma')$ in $\mathbb Z^{p}$ starting from the origin and of length $(n-1)$ , the left side of \eqref{eq:lintersect} is bounded by
\begin{align}
\label{eq:bubblecase1}
\Pro\bigg(\{\tilde \gamma, \tilde \gamma' \text{ are self-avoiding}\} \cap \{|\tilde \gamma \cap \tilde \gamma'| = l \}\bigg)\qquad \qquad \cdots \quad &\text{Case (i)}\\
\label{eq:bubblecase2}
+ \ \Pro\bigg(\{\tilde \gamma,\tilde  \gamma' \text{ are self-avoiding}\} \cap \{|\tilde \gamma \cap \tilde \gamma'| = l-1 \} \cap \{S_{n-1}=S_{n-1}'\}\bigg)\quad \cdots\quad &\text{Case (ii)}
\end{align}
Here $\tilde \gamma$ and $\tilde  \gamma'$ can be thought as the first $(n-1)$ steps of $\gamma$ and $\gamma'$, respectively, and we abuse the notation by writing 
\begin{equation}
\label{eq:gammas2}
\tilde \gamma = (S_{0}=0, S_{1}, \ldots, S_{n-1}),\quad
\tilde \gamma' = (S_{0}'=0, S_{1}', \ldots, S_{n-1}').
\end{equation} 
We prove Case (i) and Case (ii) in Lemma \ref{lem:main} (i) and (ii), respectively. The proofs for both cases are very similar, based on counting the ``bubbles'' of two intersecting simple random walk paths. We explain in full detail the construction in Case (i), whereas for Case (ii), we just point out the difference. \Br
Let $\gamma$ and $\gamma'$ be two paths sampled uniformly and independently from all simple random walk paths in $\mathbb Z^{p}$, starting from the origin, and of length $n \le \lfloor 10\log p\rfloor $. Note that the constant $10$ takes into account that we are actually interested in paths of length $\lfloor \log d \rfloor -1$ and $p \approx d$. Such differences are negligible when $d$ is large. 
\begin{lem} \label{lem:main}
For all $2\le n\le \lfloor 10\log p \rfloor$,  $1\le l \le n-1$, and sufficiently large $p$, we have
\begin{enumerate}[\normalfont (i)]
\item 
$\Pro\big(\{\gamma, \gamma' \text{ are self-avoiding}\} \cap \{|\gamma \cap \gamma'| = l \}\big) \le \left(\frac{1}{2p}\right)^{l} (1+o(p^{-1/2}))$,
\item 
$
\Pro\big(\{\gamma, \gamma' \text{ are self-avoiding}\} \cap \{|\gamma \cap \gamma'| = l-1 \} \cap \{S_{n}=S'_{n}\}\big) \le \left(\frac{1}{2p}\right)^{l} o(p^{-1/2}).
$
\end{enumerate}
\end{lem}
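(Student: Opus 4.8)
The plan is to control both probabilities by a careful combinatorial decomposition of the overlap pattern between $\gamma$ and $\gamma'$. First I would set up the notion of a \emph{bubble}: a maximal run of consecutive edges that $\gamma$ and $\gamma'$ share (possibly traversed in either orientation). If $|\gamma\cap\gamma'|=l$ and the shared edges organize into $b$ bubbles of lengths $l_1,\dots,l_b$ with $\sum_i l_i = l$, then along each walk the $b$ bubbles occupy $b$ disjoint sub-intervals of $\{1,\dots,n\}$. I would first fix the combinatorial data: the number $b$ of bubbles, the lengths $(l_i)$, the positions of the bubble-intervals inside $\gamma$ and inside $\gamma'$, and the relative orientation of each shared segment. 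The number of ways to choose this data is bounded by a constant depending only on $n$ (hence, since $n\le 10\log p$, by $(\log p)^{O(1)}$, which is $o(p^{1/2})$ and will be absorbed into the error term). Given the data, the remaining randomness is: the increments of $\gamma$ on the non-bubble edges, the increments of $\gamma'$ on its non-bubble edges, and the (common) increments on the bubble edges --- a total of $2(n-1)-l$ free increments each uniform over $2p$ choices, but subject to the constraint that the prescribed pairs of vertices of $\gamma$ actually coincide with the prescribed vertices of $\gamma'$ at the bubble endpoints.

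The key estimate is that each such coincidence constraint at a bubble endpoint costs a factor $\tfrac{1}{2p}(1+O(1/p))$, exactly as in the classical lace-expansion / random-walk bubble bound. More precisely, I would argue as follows. Condition on $\gamma$ entirely and on the positions of the bubbles in $\gamma'$. Each bubble forces $\gamma'$ to pass through a prescribed vertex (the start of the bubble) and then to traverse a prescribed sequence of $l_i$ edges; the latter has probability $(2p)^{-l_i}$. The former --- landing $\gamma'$ on a prescribed vertex at a prescribed time --- has probability at most $\sup_z \Pro(S'_m = z)$ for the relevant number $m$ of free steps, and a standard local-CLT / Fourier bound for simple random walk on $\mathbb Z^p$ gives $\sup_z \Pro(S'_m=z) \le \tfrac{C}{2p} + \tfrac{C'}{(2p)^{3/2}}\cdot(\text{number of steps})$, so that each bubble after the first contributes one extra factor $\tfrac{1}{2p}(1+o(p^{-1/2}))$ (the first bubble's ``landing'' is for free because both walks start at the origin only if the first bubble is initial; otherwise it too costs $\tfrac{1}{2p}$). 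Multiplying over bubbles yields, for fixed combinatorial data, a bound of the form $(2p)^{-l}\cdot(2p)^{-(b-1)}(1+o(p^{-1/2}))$, which is at most $(2p)^{-l}(1+o(p^{-1/2}))$ since $b\ge 1$; summing over the $(\log p)^{O(1)}$ choices of data keeps us at $(2p)^{-l}(1+o(p^{-1/2}))$. This proves~(i).

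For~(ii) the extra ingredient is the constraint $S_{n-1}=S'_{n-1}$ (endpoints of the two length-$(n-1)$ walks coincide). I would split into two sub-cases: either the last edge is itself shared, in which case it belongs to the final bubble and one of the $l-1$ ``free'' overlap edges together with this forced terminal coincidence reproduces an extra $(2p)^{-1}$-type gain; or the last edge is not shared, in which case $S_{n-1}=S'_{n-1}$ is an \emph{additional} coincidence constraint on top of the $b$ bubble constraints, contributing a further factor $\sup_z\Pro(S''_m = z)\le \tfrac{C}{2p}(1+o(1))$ by the same local-CLT bound. Either way we pick up one more factor $\tfrac{1}{2p}$ beyond the $(2p)^{-(l-1)}$ already present, and the leftover $(1+o(p^{-1/2}))$ from the local-CLT corrections is what upgrades the bound to $(2p)^{-l}\,o(p^{-1/2})$ --- note the error is genuinely $o(p^{-1/2})$ rather than $1+o(p^{-1/2})$ precisely because the terminal constraint is one that, unlike an initial bubble, never comes ``for free.''

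The main obstacle, and where I would spend the most care, is the local-CLT input: one needs a bound on $\sup_z\Pro(S_m = z)$ for simple random walk on $\mathbb Z^p$ that is \emph{uniform in the dimension $p$} and sharp enough to give the $\tfrac{1}{2p}(1+o(p^{-1/2}))$ constant, not merely $O(1/p)$, because the error terms are being multiplied over up to $n\le 10\log p$ bubbles and raised against the geometric series in $l$ later in Section~2.5. The cleanest route is a Fourier/characteristic-function computation: $\Pro(S_m=z) = (2\pi)^{-p}\int_{[-\pi,\pi]^p}\widehat{\mu}(\theta)^m e^{-i\theta\cdot z}\,d\theta$ with $\widehat\mu(\theta) = \tfrac1p\sum_{j=1}^p\cos\theta_j$, and one must show the contribution away from $\theta=0$ is negligible at the required precision while the contribution near $0$ reproduces the Gaussian constant $\tfrac{1}{2p}$ to leading order with a relative error $O(m/p)=o(p^{-1/2})$ since $m=O(\log p)$. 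A secondary technical point is bookkeeping the orientations and the possibility that a bubble is traversed in opposite directions by $\gamma$ and $\gamma'$, but this only affects the constant in the $(\log p)^{O(1)}$ prefactor and not the power of $2p$.
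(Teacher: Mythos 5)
Your overall decomposition (overlap segments, combinatorial data bounded by $(\log p)^{O(1)}$, each extra coincidence constraint costing a factor of order $1/(2p)$) is the same skeleton as the paper's argument. But there is a genuine gap in part (ii), at exactly the configuration that forces the paper to introduce a separate estimate. Consider the case where the $l-1$ shared edges form a \emph{single} segment attached to the origin (both walks traverse the same first $l-1$ edges, then diverge and meet again at $S_{n}=S'_{n}$). The shared edges give $(2p)^{-(l-1)}$, and your ``one more factor $\tfrac{1}{2p}$ from the terminal coincidence'' gives $(2p)^{-l}$ -- but that is $(2p)^{-l}(1+o(p^{-1/2}))$, not $(2p)^{-l}\,o(p^{-1/2})$. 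Your closing sentence asserts that the leftover $(1+o(p^{-1/2}))$ ``upgrades'' the bound to $(2p)^{-l}o(p^{-1/2})$; this is a non sequitur, since $1+o(p^{-1/2})$ is not $o(p^{-1/2})$. The missing ingredient is that the two divergent tails form a closed loop of length $m=2(n-l)$ which, \emph{because both walks are self-avoiding and separate after the shared segment}, cannot close by an immediate backtrack; the paper's estimate $\Pro(\tilde S_{2}\neq 0,\ \tilde S_{m}=0)\le (m-2)^{2}/(2p)^{2}$ (two of the remaining steps must reverse the first two coordinates used) is what converts the naive $\tfrac{1}{2p}$ for this loop into $O(n^{2}/p^{2})$ and hence produces the extra $o(p^{-1/2})$. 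Without some version of this you cannot reach the stated bound in (ii). A smaller instance of the same looseness appears in (i): after bounding each fixed data choice by $(2p)^{-l}(1+o(p^{-1/2}))$ you ``sum over the $(\log p)^{O(1)}$ choices'' and claim the total is still $(2p)^{-l}(1+o(p^{-1/2}))$; as written that sum is $(\log p)^{O(1)}(2p)^{-l}$. The fix is available in your own setup -- only the single configuration where both walks share their initial $l$ edges with no landing constraint contributes the full $(2p)^{-l}$, and every other data choice carries at least one extra factor $\tfrac{1}{2p}$ -- but it needs to be said.

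Separately, the local-CLT/Fourier input you flag as ``the main obstacle'' is both unnecessary and, as described, unsound in this regime. The walks have only $m=O(\log p)$ steps in dimension $p$, so they are nowhere near diffusive equilibrium and the claim that the contribution near $\theta=0$ ``reproduces the Gaussian constant $\tfrac{1}{2p}$'' does not reflect what happens (e.g.\ $\Pro(S_{2}=0)=\tfrac{1}{2p}$ comes from exact backtracking, not from a Gaussian bulk). All that is needed is the exact, dimension-uniform, one-line bound
\begin{equation*}
\Pro(S_{m}=t)\le \sum_{s}\Pro(S_{m-1}=s)\Pro(S_{1}=t-s)\le \frac{1}{2p}\qquad (m\ge 1),
\end{equation*}
which is the paper's \eqref{eq:fact1} and carries no error term at all. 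Replacing your local-CLT step by this bound, and supplying the non-backtracking loop estimate above for the single-segment case of (ii), would close the argument.
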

Case (i) and (ii) follow from Lemma \ref{lem:main}, when we replace $n$ by $(n-1)$ and $\gamma, \gamma'$ by $\tilde \gamma, \tilde \gamma'$, respectively.
\begin{proof}[Proof of Lemma \ref{lem:main} \normalfont (i)] $ $\\ 
\vspace{0.1cm}

For each $1\le K\le l$, let $B_{K}$ denote the event that the $l$ overlapping edges are clustered in $K$ consecutive pieces.
Let $n_{1}, \ldots, n_{K}$ be the lengths of these segments, $m_{1}, \ldots, m_{K}$ (resp. $m_{1}', \ldots, m_{K}'$) be the indices of their starting points in $\gamma$ (resp. in $\gamma'$). For example, in the left part of Figure \ref{fig:fig111}, we have $l=5$ and $K=2$. For convenience, we also denote by $A_{l}$ the event $|\gamma \cap \gamma|=l$ and $G_{\gamma}$ (resp. $G_{\gamma'}$) the event that $\gamma$ (resp. $\gamma'$) is self-avoiding. The original probability is just
\[
\Pro(G_{\gamma}\cap G_{\gamma'} \cap A_{l} ) = \sum_{K=1}^{l}\Pro(G_{\gamma}\cap G_{\gamma'}\cap A_{l}\cap B_{K}).
\]
Note that, on the event $G_{\gamma}\cap G_{\gamma'}$, the number of segments $K$ must be the same in both $\gamma$ and $\gamma'$, i.e., the situations in the middle and on the right of Figure \ref{fig:fig111} cannot happen. 
 
\begin{figure}[ht]
\scalebox{0.9}{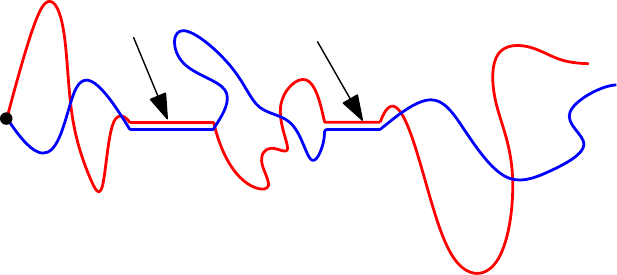}
\scalebox{1.3}{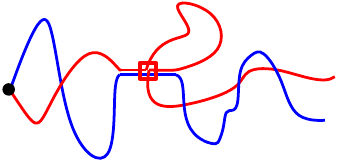}
\scalebox{0.6}{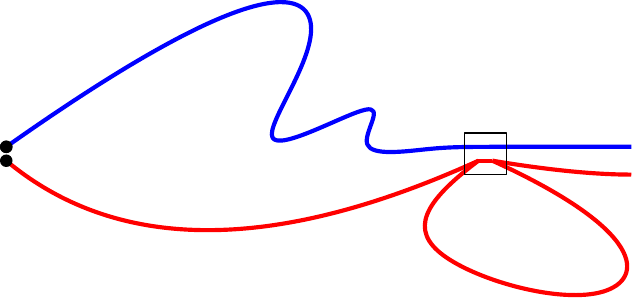}
\caption{An illustration of the definition of $K$ and $n_{1}, \ldots, n_{K}$. Parallel segments in the figure represent overlapping edges. On the left, two overlapping segments with the same orientation. In the middle, two consecutive edges in $\gamma$ overlap with two distant edges in $\gamma'$. On the right, two edges in $\gamma$ overlap with the same edge in $\gamma'$. The last two situations cannot occur in the event $G_{\gamma}\cap G_{\gamma'}$. }
\label{fig:fig111}
\end{figure}   

\vspace{0.1cm}
When $K=1$, the $l$ overlapping edges are clustered in one segment. There are two possible alignments for the overlapping segment, either along the same or along different directions (see Figure \ref{fig:easy}). In both cases, nonoverlapping pieces of $\gamma$ and $\gamma'$ form a ``bubble''-like shape.

The first situation happens with probability no more than
\begin{align*}
\sum_{m_{1}=0}^{n-l}\sum_{m_{1}'=0}^{n-l}\Pro(S_{m_{1}}=S_{m_{1}'}' ) \left(\frac{1}{2p}\right)^{l}
&=\left(\frac{1}{2p}\right)^{l} \left[ 1 + \sum_{m_{1}=0}^{n-l}\sum_{m_{1}'=0}^{n-l}\Pro(S_{m_{1}}=S_{m_{1}'}') \mathbf 1_{\{m_{1}+m_{1}' >0\}}\right]\\
&=\left(\frac{1}{2p}\right)^{l} \left[ 1 + \sum_{m_{1}=1}^{n-l}\sum_{m_{1}'=1}^{n-l}    \Pro(S_{m_{1}+m_{1}'}=0) \right]\\
&\le\left(\frac{1}{2p}\right)^{l} \left[ 1 + \frac{(n-l)^{2}}{2p}  \right].
\end{align*}
We have used the fact that if $\{S_{m}\}_{m\ge 0}$ is a simple random walk on $\mathbb Z^{p}$, then for all $m \ge 1$ and $t \in \mathbb Z^{p}$,
\begin{equation}
\label{eq:fact1}
\Pro(S_{m}=t) \le \sum_{s\in \mathbb Z^{p}}\Pro(S_{m-1}=s)\Pro(S_{1}=t-s) \le \frac{1}{2p} \sum_{s\in \mathbb Z^{p}}\Pro(S_{m-1}=s)\le \frac{1}{2p}.
\end{equation}
We write all terms for the second situation (Figure \ref{fig:easy}, right), as this will give us the spirit of the general $K > 1$ case. The probability is no more than
\begin{align*}
\sum_{m_{1}, m_{1}'=0}^{n-l-1}\sum_{t, t', s, s'\in \mathbb Z^{p}}  \Pro 
\left(
\begin{array}{c}S_{m_{1}}=t, S_{l+m_{1}}-S_{m_{1}}=s,\\
S_{m_{1}'}'=t', S_{l+m_{1}'}'-S_{m_{1}'}'=s', \\
S_{m_{1}'+j}'=S_{m_{1}+l-j}, 0\le j\le l
\end{array}\right)\mathbf 1_{\{m_{1}+m_{1}' > 0\}}.
\end{align*}

\begin{figure}[ht]
\scalebox{0.6}{ 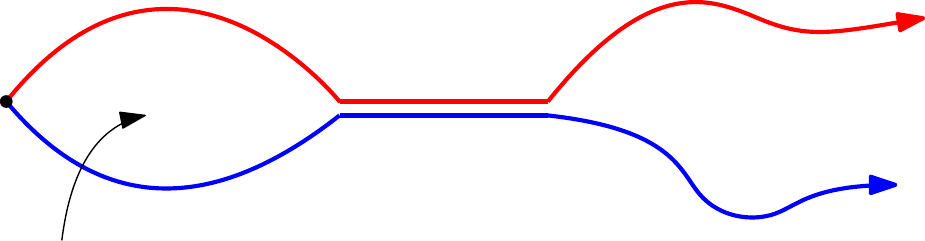}
\scalebox{0.5} {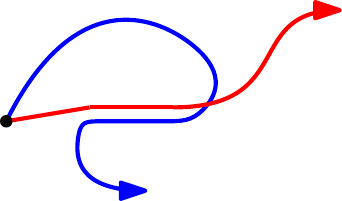}
 \centering
 \def \svgwidth{3000pt}
 \caption{Case $K=1$. The two paths $\gamma$ and $\gamma'$ intersect in a single segment and produce one bubble. They overlap along the same direction (left) or along opposite directions (right).  Parallel segments in the figure represent overlapping edges.}
 \label{fig:easy}
\end{figure}
Note that in this case, we can not have $m_{1}=m_{1}'=0$, because otherwise the overlapping segments would have aligned along the opposite direction. Conditioning on the events $\{ S_{m_{1}'}'=t', S_{l+m_{1}'}'-S_{m_{1}'}'=s'\} $ and $\{S_{m_{1}'+j}'=S_{m_{1}+l-j}, 0\le j\le l\}$, there is only one possibility for the choices of $t$ and $s$, i.e., $s=-s',\ t=t'-s'$. Moreover, the event $S_{l+m_{1}} - S_{m_{1}} =-s'$ is implied by the event $S'_{m_{1}'+j}=S_{m_{1}+l-j}$, $0\le j\le l$. Thus, using the independence between $\gamma$ and $\gamma'$, we can write
\begin{align*}
\left(\frac{1}{2p}\right)^{l}&  \sum_{m_{1}, m_{1}'=0}^{n-l}\sum_{t', s'\in \mathbb Z^{p}}  \Pro 
\left(S_{m_{1}}=t'-s'\right)\Pro(S'_{m_{1}'}=t')\Pro(S'_{m_{1}'+l}-S'_{m_{1}'}=s')\mathbf 1_{m_{1}+m_{1}' > 0}\\
&\le \left(\frac{1}{2p}\right)^{l}  \sum_{m_{1}, m_{1}'=0}^{n-l}\sum_{t', s'\in \mathbb Z^{p}} 
\frac{1}{2p}\cdot \Pro(S'_{m_{1}'}=t')\Pro(S'_{m_{1}'+l}-S'_{m_{1}'}=s')\mathbf 1_{m_{1}+m_{1}' > 0}\\
&\le \left(\frac{1}{2p}\right)^{l}  \sum_{m_{1}, m_{1}'=0}^{n-l}\frac{1}{2p} \le \left(\frac{1}{2p}\right)^{l} \frac{(n-l)^{2}}{2p}.
\end{align*}
Combining the two situations and using $n=C_{0}\log p$, we conclude
\[
\Pro(G_{\gamma}\cap G_{\gamma'}\cap A_{l}\cap B_{1}) \le \left(\frac{1}{2p}\right)^{l}(1+o(p^{-1/2})).
\]
From the $K=1$ case, we have the following observations, and the last observation is the most important for the general $1< K \le l$ case. 
\begin{enumerate}[\normalfont a.]
\item The overlapping edges give the factor $(1/2p)^{l}$.
\item There are either $(K-1)$ or $K$ bubble segments (i.e., those that do not overlap with the other path) in $\gamma$ and $\gamma'$, depending on whether $m_{1}$ and $m_{1}'$ are both zero or not. The total number of bubble and overlapping segments is either $2K-1$ or $2K$. 
\item Conditioning on the values of all $2K-1$ or $2K$ segments of $\gamma'$, there is at most one solution for the values on the segments of $\gamma$. The event that ``those $K$ overlapping segments in $\gamma$ take particular values'' is absorbed in the event that ``each edge in these segments overlaps with $\gamma$'', whereas the event ``the bubble segments take particular values'' occurs with probability no more than $\left(\frac{1}{2p}\right)^{K-1}$, due to \eqref{eq:fact1}.
\end{enumerate}
We now proceed to the general $1 < K \le l$ case. For each $K$ fixed, we first compute the number of ways to divide the $l$ overlapping edges into $K$ (nonempty) groups of sizes $n_{1}, n_{2}, \ldots, n_{K}$, which is no more than $\binom{l-1}{K-1}\le l^{K-1}$.  Next, we determine the positions of these $K$ overlapping segments in $\gamma$: there are $\binom{n}{K} $ ways to choose the starting points $m_{1} < m_{2} < \cdots < m_{K}$ of these segments, and once we have the starting points, we have $K!$ ways to associate a running length $n_{j_{i}}$ to a starting point $m_{i}$. This is over-counting, because if, say, $m_{2}-m_{1} < n_{3}$, then the overlapping segment starting at $m_{1}$ can not be longer than $n_{3}$.  We do the same for $\gamma'$. Once we have the locations of overlapping segments in $\gamma$ and $\gamma'$, we know exactly which segment in $\gamma$ overlap with which segments in $\gamma'$. There is an additional factor $2^{K}$, which counts for the two possible directions of alignment for each overlapping segment. These together give us a combinatorial number no larger than
\[
l^{K-1}\cdot \left[\binom{n}{K} \cdot K!\right ]^{2} 2^{K} \le \frac{(2ln^{2})^{K}}{l}, 
\]
which is an upper bound of all possible overlapping patterns (one of these is illustrated in Figure 3). Each pattern occurs with probability no more than
\[
\left(\frac{1}{2p}\right)^{l} \left(\frac{1}{2p}\right)^{K-1},
\]
and hence for all $2\le K \le l$, 
\[
\Pro(G_{\gamma}\cap G_{\gamma'}\cap A_{l}\cap B_{K}) \le \left(\frac{1}{2p}\right)^{l} \left[\frac{2l^{\frac{K-1}{K}}n^{2}}{(2p)^{\frac{K-1}{K}}}\right]^{K} \le  \left(\frac{1}{2p}\right)^{l} \left(\frac{ln^{2}}{p}\right)^{K}\frac{2p}{l} .
\]
\begin{figure}[ht]
\scalebox{1.2}{ 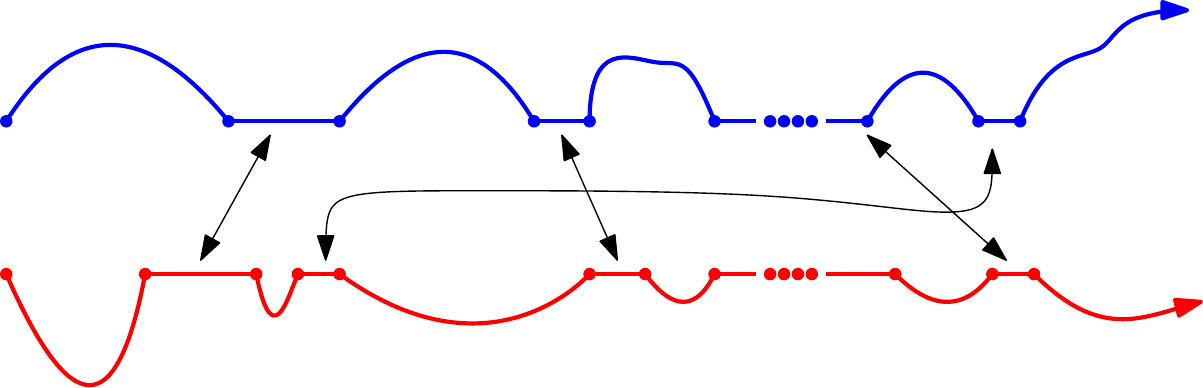}
\caption{Overlapping paths $\gamma$ and $\gamma'$ and their matching:  we split the $l$ overlapping edges into groups of sizes $n_{1}, n_{2}, \dots$, starting at points $m_{1}, m_{2}, \ldots$. Each overlapping piece can meet in a positive or negative orientation.}
\end{figure}
\\
Summing over $K$ and using the fact that $K \le l \le n \le C_{0}\log p$, we obtain
\[
\frac{2p}{l}\sum_{K= 2}^{l} \left(\frac{1}{2p}\right)^{l} \left(\frac{ln^{2}}{p}\right)^{K} = \left(\frac{1}{2p}\right)^{l}\frac{2ln^{4}}{p}\sum_{K= 0}^{l-2}\left(\frac{ln^{2}}{p}\right)^{K}  = \left(\frac{1}{2p}\right)^{l}\frac{2ln^{4}}{p} \frac{1}{1-\frac{ln^{2}}{p}}= \left(\frac{1}{2p}\right)^{l}o(p^{-1/2}),
\]
which finishes the proof of Lemma \ref{lem:main} (i).
\end{proof}
\begin{proof}[Proof of Lemma \ref{lem:main} \normalfont (ii)] When $K \ge 3$, there are at least $(K-1) \ge 2$ bubble segments in both $\gamma$ and $\gamma'$. In this case, we can simply ignore the last event $\{S_{n}=S'_{n}\}$ when calculating the probability. Following the same strategy as in Case (ii), one can easily get
\begin{align*}
&\left(\frac{1}{2p}\right)^{l-1}\sum_{K = 3}^{l} \frac{(2ln^{2})^{K}}{l\cdot [2p]^{K-1}} = \left(\frac{1}{2p}\right)^{l}\sum_{K=1}^{l-2} \frac{(2ln^{2})^{K+2}}{l\cdot [2p]^{K}} =  \frac{4l^{2}n^{6}}{p} \left(\frac{1}{2p}\right)^{l}\sum_{K=0}^{l-3} \left(\frac{ln^{2}}{p}\right)^{K}\\
&\qquad \le \left(\frac{1}{2p}\right)^{l}  \frac{4l^{2}n^{6}}{p} \frac{1}{1-\frac{ln^{2}}{p}} = \left(\frac{1}{2p}\right)^{l} o(p^{-1/2}).
\end{align*}
When $K=1$ and if $m_{1}=m_{1}'= 0$ (i.e., $\gamma$ and $\gamma'$ overlap at the first $l-1$ edges) or $m_{1}=m_{1}'=(n-l+1)$ (i.e., $\gamma$ at $\gamma'$ overlap at the last $l-1$ edges), there is only bubble segment in both $\gamma$ and $\gamma'$. Also, on the event that they are both self-avoiding and $S_{0}=S_{0}', S_{n}=S_{n}'$, the overlapping edges must align in the same direction.  This case is illustrated in Figure \ref{fi:fig4}.

\begin{figure}[h]
\scalebox{1.0}{ 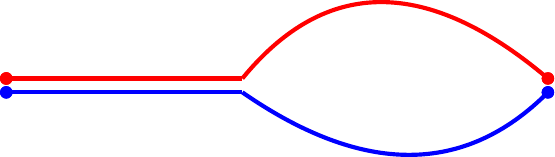}
\scalebox{1.0} {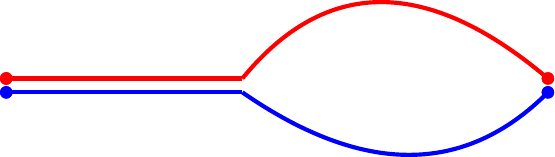}
\caption{Representation of the case $K=1$ with an aligned overlap. Only one bubble happens either at the end or the beginning of the paths.}
\label{fi:fig4}
\end{figure}
\begin{figure}\begin{centering}
\scalebox{0.9}{ 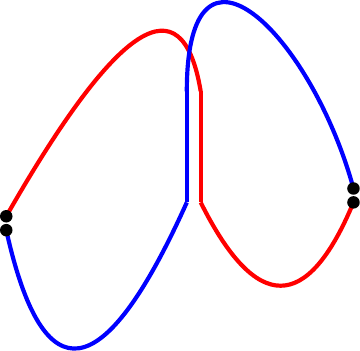}
\caption{Representation of the case $K=1$ with a negative alignment of the overlapping segment. In this case two bubbles must be created. Parallel segments represent identical edges.}
\label{fg:fi5}
\end{centering}
\end{figure}

In either situation, the probability of seeing such a ``bubble'' is no more than
\begin{equation}
\label{eq:fact2}
\Pro(\tilde S_{2} \ne 0, \tilde S_{m}=0) \le \frac{(m-2)^{2}}{(2p)^{2}},
\end{equation}
where $\tilde S_{m}$ denotes a simple random walk on $\mathbb Z^{p}$ of length $m \ge 3$. The  reason is that, conditioning on the event $\{\tilde S_{2}= x\}$ where $x\ne 0$, two of the next $(m-2)$ steps must go along the reverse directions of the coordinates used in the first two steps in order to return to the origin at the $m$-th step (note $m$ must be even). This happens with probability no more than $\frac{(m-2)^{2}}{(2p)^{2}}$. \Br
For all other values of $m_{1}$ and $m_{1}'$, there are at least two bubble segments (e.g., Figure \ref{fg:fi5}), which we can easily estimate using \eqref{eq:fact1}. We use $1=K \le l \le n \le C_{0}\log p$ again and compute
\[
2\cdot \left(\frac{1}{2p}\right)^{l-1} \frac{(2n-2)^{2}}{(2p)^{2}} + \left(\frac{1}{2p}\right)^{l-1} \frac{(2n^{2})^{1}}{(2p)^{2}} = \left(\frac{1}{2p}\right)^{l}o(p^{-1/2})
\]
For $K=2$, if $\gamma$ or $\gamma'$ has 2 bubble segments, we can use the strategy in Case (i) and \eqref{eq:fact1}. If there is only 1 bubble segments, then the two overlapping segments must attach to $S_{0}=S_{0}'=0$ and $S_{n}=S_{n}'$. Since $\gamma$ and $\gamma'$ are self-avoiding, this situation is quite similar to the fusion of bubbles in Figure 4 (a) and (b). The bubble in the middle can be easily estimated using \eqref{eq:fact2}.  We leave the details to the readers. 
\end{proof}

\section{Proof of the Lower Bound}\label{sec:lb}
In this section we establish the desired lower bound. We assume the existence of a constant $a \in [0,\infty)$ so that
\begin{align}
\label{eq:distLB}
\lim_{x\downarrow 0} \frac{\Pro(\tau_{e} \le x)}{x} = a.
\end{align}
Note that this condition is weaker than \eqref{eq:dist2}.

\begin{prop}\label{prop:LB} Assume $\mathbb E \tau_{e} <\infty$ and  \eqref{eq:distLB}. Then,
$$
\displaystyle  \liminf_{d\to \infty} \frac{\mu(e_{1})ad}{\log d}  \ge \frac{1}{2}.$$
\end{prop}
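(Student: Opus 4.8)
The plan is to fix a large dimension $d$ and a parameter $\delta\in(0,1)$ and to show that $\Pro\bigl(T(0,ne_1)\le c_d n\bigr)\to 0$ as $n\to\infty$, where $c_d\mathdef(1-\delta)\frac{\log d}{2ad}$; I treat $a\in(0,\infty)$ (the case $a=0$ is the same computation with $\epsilon\downarrow0$ and yields $\mu(e_1)d/\log d\to\infty$). Granting this, $\E[T(0,ne_1)]\ge c_d n\,\Pro\bigl(T(0,ne_1)>c_d n\bigr)=c_d n(1-o_n(1))$, and since $\E[T(0,ne_1)]/n\to\mu(e_1)$ (the $L^1$ convergence recalled in the introduction) we get $\mu(e_1)\ge c_d$ for every large $d$, hence $\liminf_{d\to\infty}\frac{\mu(e_1)ad}{\log d}\ge\frac{1-\delta}{2}$; letting $\delta\downarrow0$ concludes.

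For the displayed probability I would run a first--moment bound over \emph{self-avoiding} paths: loop-erasure does not increase passage time, so $T(0,ne_1)=\inf\{T(\gamma):\gamma\ \text{self-avoiding},\ 0\to ne_1\}$, and a self-avoiding path of length $m$ has $m$ distinct edges, so its passage time is distributed as $S_m\mathdef\sum_{i=1}^m\tau_i$. Thus, with $W_m(n)$ the number of nearest-neighbour walks of length $m$ from $0$ to $ne_1$,
\[
\Pro\bigl(T(0,ne_1)\le c_d n\bigr)\le\sum_{m\ge n}W_m(n)\,\Pro(S_m\le c_d n).
\]
I would bound the two factors separately. Projecting a walk onto its first coordinate gives $W_m(n)\le(2d)^m\Pro(Y_m=n)$, where $(Y_k)$ is the first coordinate of the simple random walk on $\mathbb Z^d$, i.e.\ the lazy walk on $\mathbb Z$ moving by $\pm1$ with probability $1/(2d)$ each; Markov's inequality applied to $e^{\lambda Y_m}$ then gives, for every $\lambda>0$,
\[
W_m(n)\le(2d)^m e^{-\lambda n}\exp\!\Bigl(\tfrac{m(\cosh\lambda-1)}{d}\Bigr).
\]
For the sum of passage times I would use a Cramér-type estimate (weaker than Lemma~\ref{lem:kesten8.8} but valid for all thresholds, using only \eqref{eq:distLB}): fixing $\epsilon>0$, \eqref{eq:distLB} gives some $\epsilon_0>0$ with $\Pro(\tau_e\le x)\le(a+\epsilon)x$ on $[0,\epsilon_0]$, whence $\E[e^{-\lambda\tau_e}]\le(a+\epsilon)/\lambda+e^{-\lambda\epsilon_0}\le(a+2\epsilon)/\lambda$ for $\lambda$ large, and optimising $\Pro(S_m\le t)\le e^{\lambda t}\E[e^{-\lambda\tau_e}]^m$ at $\lambda=m/t$ yields $\Pro(S_m\le t)\le\bigl(e(a+2\epsilon)t/m\bigr)^m$ as soon as $m/t$ is large, which holds for all $m\ge n$ since $c_d\to0$.

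Combining the two bounds with $t=c_dn$, $a''\mathdef a+2\epsilon$, and $B\mathdef 2d\,e\,a''c_d n\,\exp((\cosh\lambda-1)/d)$, one gets
\[
\Pro\bigl(T(0,ne_1)\le c_d n\bigr)\le e^{-\lambda n}\sum_{m\ge n}(B/m)^m\le e^{-\lambda n}\bigl(2+2B\,e^{B/e}\bigr),
\]
so it remains to choose $\lambda=\lambda(d)$ with $B/e-\lambda n\to-\infty$, i.e.\ $2d\,a''c_d\,e^{(\cosh\lambda-1)/d}<\lambda$. Taking $\lambda=(1-\delta/2)\log d$ makes $(\cosh\lambda-1)/d=O(d^{-\delta/2})\to0$, so the requirement reads $2d\,a''c_d(1+o(1))<(1-\delta/2)\log d$; since $2d\,a\,c_d=(1-\delta)\log d$, it suffices to have picked $\epsilon$ small (e.g.\ $a+2\epsilon\le a(1+\delta/4)$) and $d$ large. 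Then the right-hand side is $O(n\log d)\,d^{-\Theta(\delta)\,n}\to0$ as $n\to\infty$, as needed.

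The step I expect to be the real difficulty is the last one: near-optimal paths to $ne_1$ have length of order $n\log d$, so the trivial count $(2d)^m$ is hopelessly large there, and everything hinges on the large-deviations gain $e^{-\lambda n}e^{m(\cosh\lambda-1)/d}$ from projecting onto the first coordinate being strong enough to beat it, with the constants matching the target value $(1-\delta)/2$ --- in particular $\lambda$ must be taken of order $\log d$. By contrast, the reduction to fixed $d$, the Cramér estimate, and the loop-erasure reduction are routine.
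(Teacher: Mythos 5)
Your proposal is correct and is essentially the paper's argument: a union bound over (self-avoiding) lattice paths, with the path count controlled by an exponential tilt of the first coordinate (exactly Kesten's bound, quoted as Lemma \ref{lem:KestenCombinatorialBound}) and the passage-time sum controlled by a Cram\'er bound equivalent to Lemma \ref{lem:Sonice}. The only cosmetic differences are that you use a single fixed tilt $\lambda=(1-\delta/2)\log d$ and conclude from $\E T(0,ne_1)/n\to\mu(e_1)$, whereas the paper optimizes $\rho$ as a function of the path length $k$, splits the sum at $M=2en(1-\delta)\log d$, and concludes via Borel--Cantelli from summability in $n$; both routes close.
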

\noindent For the proof of Proposition \ref{prop:LB}, we will need a few lemmas. Let $b_{n} = T(0,H_{n})$ be the passage time from the origin to the hyperplane $H_{n}$. 
\begin{lem}\label{lem:basiclemmaBL}
If for some constant $x>0$, $$\sum_{n} \mathbb P (b_{n} \leq xn) < \infty$$ then $\mu(e_{1}) \geq x$.
\end{lem}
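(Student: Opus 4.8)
The plan is to deduce the statement from the first Borel--Cantelli lemma together with the elementary comparison $b_n \le T(0,ne_1)$. First I would observe that since $ne_1 \in H_n$, every finite path from $0$ to $ne_1$ is in particular a path from $0$ to some point of $H_n$, hence admissible in the infimum defining $b_n = T(0,H_n)$; therefore
\[
b_n \le T(0,ne_1) \qquad \text{for every } n \ge 1.
\]

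Next I would assume $\sum_n \mathbb P(b_n \le xn) < \infty$ and apply the first Borel--Cantelli lemma: almost surely the event $\{b_n \le xn\}$ occurs for only finitely many $n$, so there is a random $n_0 < \infty$ with $b_n > xn$ for all $n \ge n_0$. Combined with the comparison above this gives $T(0,ne_1) > xn$ for all $n \ge n_0$, almost surely, and hence
\[
\liminf_{n\to\infty} \frac{T(0,ne_1)}{n} \ge x \qquad \text{a.s.}
\]
Since $\mathbb E\tau_e < \infty$, the limit $\mu(e_1) = \lim_{n\to\infty} T(0,ne_1)/n$ exists almost surely (as recalled in the introduction), so the previous display forces $\mu(e_1) \ge x$ almost surely; being deterministic, $\mu(e_1) \ge x$, which is the claim.

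I do not expect any genuine obstacle here: the argument is a direct application of Borel--Cantelli. The only point worth flagging is that the paths in the definition of $b_n$ are unconstrained, which is exactly what makes $b_n \le T(0,ne_1)$ hold; and if one prefers to avoid invoking the existence of $\mu(e_1)$, one can phrase everything in terms of $\liminf_n T(0,ne_1)/n$, though this refinement is immaterial since the lemma will only be used under $\mathbb E\tau_e < \infty$.
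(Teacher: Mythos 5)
Your proof is correct and follows essentially the same route as the paper: both rest on the first Borel--Cantelli lemma applied to the events $\{b_n \le xn\}$. The only (harmless) difference is how you connect $b_n$ to $\mu(e_1)$ --- the paper cites the fact that $b_n/n \to \mu(e_1)$ a.s.\ (Kesten's Eq.\ (1.13)), whereas you use the elementary comparison $b_n \le T(0,ne_1)$ together with the definition of $\mu(e_1)$, which makes your version slightly more self-contained.
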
 
\begin{proof}
This  is a consequence of the Borel-Cantelli lemma as the time constant $\mu(e_{1})$ is also the limit of $b_{n}/n$ as $n$ goes to infinity \cite[Equation (1.13)]{Aspects}.
\end{proof}
\noindent We now fix $\delta >0$ and set 
\begin{equation}\label{def:definitionofxLB}
 x = \frac{(1-\delta)\log d}{2ad}.
 \end{equation}

\begin{lem}\label{lem:Sonice}
For any $\delta >0$, there exists $d$ large enough so that, for any fixed $n \in \mathbb N$ and  any $k\geq n$,
$$ \mathbb P (S_{k} \leq nx) \leq \bigg(\frac{(1+\delta)eanx}{k}\bigg)^{k}.$$ 
\end{lem}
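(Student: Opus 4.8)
The plan is to prove Lemma~\ref{lem:Sonice} by a standard Chernoff/exponential Markov bound, exploiting the near-linear behavior of $F$ near $0$ encoded in \eqref{eq:distLB}. First I would observe that since $x\to 0$ as $d\to\infty$, for $d$ large enough the value $nx$ lies in a neighborhood of $0$ where \eqref{eq:distLB} is effective; in particular for any $\epsilon>0$ there is $\epsilon_1>0$ with $\Pro(\tau_e\le t)\le (1+\epsilon) a t$ for all $t\in[0,\epsilon_1]$, and $nx\le \epsilon_1$ once $d$ is large (for fixed $n$). I would then write, for any $\lambda>0$,
\[
\Pro(S_k\le nx)\le e^{\lambda nx}\,\E\big[e^{-\lambda \tau_e}\big]^k,
\]
and estimate the Laplace transform $\E[e^{-\lambda\tau_e}]$. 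Integrating by parts (or using the layer-cake/tail formula) gives
\[
\E\big[e^{-\lambda\tau_e}\big]=\lambda\int_0^\infty e^{-\lambda t}\,\Pro(\tau_e\le t)\,\d t,
\]
and splitting the integral at $\epsilon_1$ and using $\Pro(\tau_e\le t)\le (1+\epsilon)at$ on $[0,\epsilon_1]$ (and the trivial bound $\le 1$ beyond, whose contribution is $e^{-\lambda\epsilon_1}$, negligible for large $\lambda$) yields
\[
\E\big[e^{-\lambda\tau_e}\big]\le \frac{(1+\epsilon)a}{\lambda}+e^{-\lambda\epsilon_1}.
\]
For $\lambda=\lambda(d)\to\infty$ chosen appropriately (say $\lambda$ of order $d/\log d$ or larger, so that $e^{-\lambda \epsilon_1}$ is swallowed into the $(1+\epsilon)$ factor), this gives $\E[e^{-\lambda\tau_e}]\le (1+\epsilon')a/\lambda$.

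Next I would plug this back in: $\Pro(S_k\le nx)\le e^{\lambda nx}\big((1+\epsilon')a/\lambda\big)^k$. Optimizing over $\lambda$ — the function $\lambda\mapsto e^{\lambda nx}\lambda^{-k}$ is minimized at $\lambda=k/(nx)$ — produces
\[
\Pro(S_k\le nx)\le e^{k}\Big(\frac{(1+\epsilon')anx}{k}\Big)^k=\Big(\frac{(1+\epsilon')eanx}{k}\Big)^k,
\]
which is exactly the claimed bound with $\delta$ in place of $\epsilon'$, valid once $d$ is large enough (depending on $n$ and $\delta$). One needs to check that the optimal $\lambda=k/(nx)$ is indeed large enough for the Laplace-transform estimate to apply: since $k\ge n$ we have $\lambda\ge 1/x=2ad/((1-\delta)\log d)\to\infty$, so this is automatic.

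The only real subtlety — and the main obstacle, though a mild one — is bookkeeping the two small parameters: \eqref{eq:distLB} only gives control of $\Pro(\tau_e\le t)$ for $t$ below some threshold $\epsilon_1=\epsilon_1(\epsilon)$ that shrinks as $\epsilon\to 0$, while $nx$ must fall below that threshold, which for fixed $n$ holds for all $d$ large. So the quantifiers must be ordered carefully: fix $\delta>0$, choose $\epsilon$ (hence $\epsilon_1$) so that $1+\epsilon'\le 1+\delta$ after absorbing the $e^{-\lambda\epsilon_1}$ term; then for each fixed $n$, all sufficiently large $d$ work uniformly over $k\ge n$ (the bound is increasing in the right direction and the estimate $\lambda=k/(nx)\ge 1/x$ is uniform in $k\ge n$). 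I would also remark that for $k$ very large the right-hand side may exceed $1$ and the statement is vacuous there, so no additional care is needed. Everything else is the routine Chernoff computation sketched above, parallel in spirit to Lemma~\ref{lem:kesten8.8} but now in the form convenient for summing over $n$ in the Borel–Cantelli argument of Lemma~\ref{lem:basiclemmaBL}.
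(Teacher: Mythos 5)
Your proof is correct and follows essentially the same route as the paper's: an exponential Chebyshev (Chernoff) bound with the tilting parameter $\lambda=k/(nx)$, where $\E\,e^{-\lambda\tau_e}$ is controlled by the near-linearity of $F$ at $0$ plus an exponentially small remainder that is absorbed into the $(1+\delta)$ factor, uniformly over $k\ge n$ since $\lambda\ge 1/x\to\infty$. The only immaterial difference is that the paper bounds the Laplace transform via a stochastic-domination/truncation argument at level $\sqrt{x}$ (depending on $d$), while you use the tail-integral formula with a fixed truncation $\epsilon_1$.
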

\begin{proof}
Fix $\delta>0$. By  \eqref{eq:distLB}, we can choose $\epsilon_{0}$ small enough so that 
$$\mathbb P(\tau_{e} \leq t) \leq (1+\delta/2)at < 1$$ for all $t \in [0,\epsilon_{0}]$. Now, let $Y$ be any nonnegative random variable with density $f(y)=(1+\delta/2)a$ on $[0,\epsilon_{0}]$. For any $0\le \epsilon \le \epsilon_{0}$, the random variable $X_{1}:= \tau_{e} \indi_{\{ \tau_{e} < \epsilon\} } + \epsilon\indi_{\{ \tau_{e} \geq \epsilon\} }$ stochastically dominates $X_{2}:=Y\indi_{\{ Y < \epsilon\}} + \epsilon\indi_{\{ Y \geq \epsilon\} }$ as for all $t \in \mathbb R$,
$$ \mathbb P(X_{1} \leq t) \leq \mathbb P(X_{2}\leq t).$$ 
Therefore for any non-increasing function $\phi:\mathbb R \to \mathbb R$ 

\begin{equation}\label{eq:toverify}
\begin{split}
\mathbb E \phi(\tau_{e})\indi_{\{ \tau_{e} < \epsilon\} } + \phi(\epsilon) \Pro(\tau_{e} \ge  \epsilon)&\leq \mathbb \E \phi(Y)\indi_{\{ Y < \epsilon\}} +  \phi(\epsilon) \Pro(Y \ge \epsilon) \\
&\leq  \int_{0}^{\epsilon} \phi(y)f(y)d y + \phi(\epsilon).
\end{split}
\end{equation}
This implies that for any $0 \le \epsilon\le \epsilon_{0}$ and any $\gamma>0$, if we take $\phi(t)= e^{-\gamma t}$ we have
\begin{equation*}
\begin{split}
\mathbb P (S_{k} \leq nx) &\leq e^{n\gamma x} \mathbb E e^{-\gamma S_{k}} 
\leq  e^{n\gamma x} \bigg( \mathbb E e^{-\gamma \tau_{e}} \indi_{\{ \tau_{e}<\epsilon\}}+ e^{-\gamma \epsilon}\Pro(\tau_{e} > \epsilon)\bigg)^{k} \\
&\leq e^{n\gamma x}\bigg( \int_{0}^{\epsilon} e^{-\gamma y}f(y) dy+ e^{-\gamma \epsilon} \bigg)^{k} \leq e^{n\gamma x}\bigg(\frac{(1+\delta/2)a}{\gamma} + e^{-\gamma \epsilon} \bigg)^{k}.
\end{split}
\end{equation*}
Choose $d$ large enough so that $\sqrt{x} \le \min \{\epsilon_{0}, 1\}$ and such that for all $y \geq 1$ 
\begin{equation}\label{eq:inequality1}
y \exp\bigg(-\frac{y}{\sqrt{x}}\bigg) \leq \frac{\delta ax}{2}.
 \end{equation}
This is possible since the left side is monotonically decreasing in $y$ on $[1, \infty)$ if $x \le 1$. Hence one suffices to find $x \le \min \{\epsilon_{0}^{2}, 1\}$ such that 
\begin{equation}\label{eq:inequality2}
\exp\bigg(-\frac{1}{\sqrt{x}}\bigg) \leq \frac{\delta ax}{2} \quad \Leftrightarrow
\quad \sqrt{x}\log\frac{\delta a}{2} + 2\sqrt{x}\log \sqrt{x} \ge -1,
\end{equation} 
which is possible by choosing $d$ large enough and making each term above greater than $-1/3$.
Now set $\gamma = k/(nx)$. By taking $$\epsilon = \frac{k}{\gamma n \sqrt{x}} =\sqrt{x} \in [0, \epsilon_{0}],\quad y = \frac{k}{n} \geq 1,$$
we have by \eqref{eq:inequality1}
\begin{equation}
\begin{split}
 \mathbb P (S_{k} \leq nx) 
 &\leq \bigg( \frac{eax(1+\delta/2)}{y}+e^{-{\frac{y}{\sqrt{x}}+1}}\bigg)^{k}
\leq  \bigg( \frac{eax(1+\delta)}{y}\bigg)^{k},
\end{split}
\end{equation}
which proves the Lemma.
\end{proof}
We still need one combinatorial estimate that we take from \cite[(6.20)]{Aspects}.

\begin{lem} \label{lem:KestenCombinatorialBound}The number $\mathcal N_{k,n}$ of lattice paths in $\mathbb Z^{d}$ from $0$ to $H_{n}$ of $k$ steps is at most 
$$(2d)^{k}\min \bigg (1, \exp\big (-n\rho + \frac{k}{d}(\cosh \rho - 1) \big) \bigg)$$
for any $\rho\geq 0$. 
\end{lem}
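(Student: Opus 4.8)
The plan is to use a first-moment / Chernoff-type estimate. Encode a lattice path of $k$ steps starting at $0$ as a sequence $(X_{1},\dots,X_{k})$ with each $X_{i}$ one of the $2d$ unit vectors $\pm e_{1},\dots,\pm e_{d}$; there are exactly $(2d)^{k}$ such sequences, which already gives $\mathcal N_{k,n}\le (2d)^{k}$, the first term in the minimum. To obtain the second term, I would put the uniform probability measure on these $(2d)^{k}$ sequences and observe that a path ends on the hyperplane $H_{n}=\{x_{1}=n\}$ precisely when the sum of the first coordinates of its steps equals $n$, so that $\mathcal N_{k,n}=(2d)^{k}\,\Pro(\text{endpoint lies in }H_{n})$.

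Write $\xi_{i}$ for the first coordinate of $X_{i}$, so the $\xi_{i}$ are i.i.d.\ with $\Pro(\xi_{i}=+1)=\Pro(\xi_{i}=-1)=\tfrac{1}{2d}$ and $\Pro(\xi_{i}=0)=1-\tfrac{1}{d}$, and the endpoint lies in $H_{n}$ exactly when $\sum_{i=1}^{k}\xi_{i}=n$. For any $\rho\ge 0$, bounding $\Pro(\cdot=n)\le\Pro(\cdot\ge n)$ and applying Markov's inequality to $e^{\rho\sum_{i}\xi_{i}}$ gives
\[
\Pro\Big(\sum_{i=1}^{k}\xi_{i}=n\Big)\le e^{-\rho n}\big(\E e^{\rho\xi_{1}}\big)^{k}.
\]
The key computation is the moment generating function of a single step, $\E e^{\rho\xi_{1}}=\tfrac{1}{2d}e^{\rho}+\tfrac{1}{2d}e^{-\rho}+\big(1-\tfrac{1}{d}\big)=1+\tfrac{1}{d}(\cosh\rho-1)$, after which the elementary inequality $1+u\le e^{u}$ applied with $u=\tfrac{1}{d}(\cosh\rho-1)\ge 0$ yields $\big(\E e^{\rho\xi_{1}}\big)^{k}\le \exp\big(\tfrac{k}{d}(\cosh\rho-1)\big)$. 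Multiplying through by $(2d)^{k}$ gives $\mathcal N_{k,n}\le (2d)^{k}\exp\big(-n\rho+\tfrac{k}{d}(\cosh\rho-1)\big)$, and combining with the trivial bound $(2d)^{k}$ produces the stated minimum.

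There is essentially no serious obstacle here; the only points requiring a little care are that ``reaching $H_{n}$'' means the $e_{1}$-displacement equals $n$, so one should pass to $\Pro(\cdot\ge n)$ before invoking Markov, and that the hypothesis $\rho\ge 0$ is exactly what makes this step legitimate. Since the estimate holds for every $\rho\ge 0$, one is free to optimize over $\rho$ later when the lemma is applied.
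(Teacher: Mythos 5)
Your proof is correct: the paper gives no argument for this lemma, simply importing it as (6.20) from Kesten's \emph{Aspects of first passage percolation}, and your exponential Chebyshev computation (uniform measure on the $(2d)^{k}$ step sequences, MGF $1+\tfrac{1}{d}(\cosh\rho-1)$ of the first coordinate of a step, then $1+u\le e^{u}$) is precisely the standard derivation of that estimate. The only hypotheses you use are exactly the ones needed ($\rho\ge 0$ for Markov applied to $e^{\rho\sum\xi_i}$), so nothing is missing.
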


\begin{proof}[Proof of Proposition \ref{prop:LB}] For $0<\delta<1$ fixed choose $x$ as in \eqref{def:definitionofxLB}. 

We will use the union bound 
$$\Pro\big(b_{n} \leq nx\big) \leq \sum_{k=n}^{\infty} \mathcal N_{k,n} \Pro(S_{k}\leq nx).$$
Set $M=4enaxd = 2en(1-\delta)\log d$. Using Lemma \ref{lem:KestenCombinatorialBound} with $\rho = \log(\frac{2dn}{k})$ for $n\leq k\leq M$ and Lemma \ref{lem:Sonice} we have
\begin{equation}\label{eq:HowAreYou}
\begin{split}
\mathbb P\big(b_{n} \leq nx\big) \leq \sum_{n\leq k \leq M}  \bigg(\frac{ke}{2dn}\bigg)^{n}\bigg(\frac{2d(1+\delta)eanx}{k}\bigg)^{k}+ \sum_{k>M} \bigg(\frac{2d(1+\delta)eanx}{k}\bigg)^{k}
\end{split}
\end{equation}
Choose $\delta \le 1/2$, the second sum in the right side of \eqref{eq:HowAreYou} is bounded above by
$$ \bigg(\frac{1+\delta}{2}\bigg)^{M} \frac{1}{1-\frac{1+\delta}{2}}\le 4 \bigg(\frac{3}{4}\bigg)^{2en(1-\delta)\log d}$$
which is summable in $n$. 
On the other hand, if we write $z = k/n$, a little algebra implies that the first sum in \eqref{eq:HowAreYou} is bounded above by 
\begin{equation}\label{eq:almostthere}
M \bigg[ \frac{e}{2d} \max_{z} \bigg( z^{1-z} [(1-\delta^{2})e \log d]^{z} : 1 \leq  z \leq 2e (1-\delta) \log d \bigg) \bigg]^{n}.
\end{equation}
The term inside the large square bracket above is bounded by 
\begin{equation}\label{eq:almostthere2}
 2e^{2} (1-\delta) \frac{\log d}{2d} \max_{z} \bigg( \left[\frac{(1-\delta^{2})e \log d}{z}\right]^{z} : 1 \leq  z \leq 2e (1-\delta) \log d \bigg) 
 \end{equation}
As for any $c>0$, the function $f(z)=(c/z)^{z}$ has a maximum value equal to $e^{{c/e}}$ on $0 \leq z \leq c$,  we obtain that \eqref{eq:almostthere2} is bounded above by 
\begin{equation}\label{Havetoaddthisonetoo}
  e^{2} (1-\delta) \frac{\log d}{d} \exp((1-\delta^{2})\log d) \le e^{2} d^{-\delta^{2}} \log d.
  \end{equation}
Given the choice of $\delta$, this term is strictly bounded above by $1$ for $d$ large enough,
which turns \eqref{eq:almostthere} summable in $n$. Now the proposition follows from Lemma \ref{lem:basiclemmaBL} with $x= \frac{(1-\delta)\log d}{2ad}$ and sending $\delta$ to $0$.
\end{proof}

We now end this subsection with the proof of Theorem \ref{thm:Thisisthefirsttheorem} in the case $a=\infty$ and $a=0$. In this case, assumption \eqref{eq:todayisFriday} implies that for any $M>0$ it is possible to find $\epsilon >0$ such that for any $x<\epsilon$, $\mathbb P (\tau_{e} \leq x) \geq x M$. Let $Y_{M}$ be a random variable with density $f(y) = My$ on $[0,\epsilon]$ and such that  for any $t\in \mathbb R$ 
$$ \Pro (Y_{M} \leq t) \leq \Pro(\tau_{e} \leq t).$$
As $\Pro(Y_{M}\leq \epsilon) \leq \Pro(\tau_{e}\leq \epsilon)$, this random variable can be constructed by simply choosing a non-decreasing function on $[\epsilon,\infty)$ that has limit $1$ at $\infty$ and is bounded above by $\Pro(\tau_{e} \leq t)$.
This way, we can use the comparison theorem of van den Berg-Kesten\cite[Theorem 2.13]{MR1202515} to obtain for any $d\geq 2$, $\mu(e_{1}) \leq \mu^{Y_{M}}(e_{1})$, where $\mu^{Y_{M}}$ is the time constant for FPP in $\mathbb Z^{d}$ with passage times distributed according to $Y_{M}$. Since $Y_{M}$ satisfies the hypothesis of Theorem  \ref{thm:Thisisthefirsttheorem} with $a=M$, we get 

$$\limsup_{d \to \infty} \frac{\mu(e_{1})d}{\log d } \leq  \lim_{d\to \infty} \frac{\mu^{Y_{M}}(e_{1})d}{\log d} = \frac{1}{2M}.$$
Taking $M$ to infinity gives us the desired result.

The case $a=0$ is similar. For any $m>0$ we  construct a random variable $Y_{m}$ that dominates $\tau_{e}$ as $ \Pro (Y_{m} \leq t) \geq  \Pro (\tau_{e} \leq t)$ and satisfies \eqref{eq:dist2} with $a=m$. Van den Berg-Kesten comparison theorem combined with the result for $a>0$ implies 
$$\liminf_{d \to \infty} \frac{\mu(e_{1})d}{\log d } \geq  \lim_{d\to \infty} \frac{\mu^{Y_{m}}(e_{1})d}{\log d} = \frac{1}{2m}.$$
The result follows by taking $m$ to zero.

\section{Application to the limit shape}\label{sec:Kuppo}
In this section, we will exclude certain candidates of possible limit shapes in high dimension, including the Euclidean ball. The method here is the same as the one used by Kesten \cite{Aspects}. We will compare the time constant $\mu(e_{1})$ in the $e_{1}$ direction with time constant $\mu^{\ast}$ in the diagonal direction. Here $\mu^{\ast}$ is defined as 
\[
\mu^{\ast} \mathdef \lim_{n\to \infty}\frac{T(\mathcal J_{n})}{n}, \quad \text{a.s.,}
\] 
where $\mathcal J_{n}$ is the hyperplane defined by $\mathcal J_{n} \mathdef \{(x_{1}, \ldots, x_{d}): x_{1} + x_{2}+\cdots+x_{d}=n\sqrt{d}\}$ and the limit hold in $L^{1}$. Without any loss in what follows, we slightly abuse our notation by taking $\sqrt{d}$ as the smallest integer greater than the square root of $d$. It has been shown in \cite{MR2753301} that, when $\tau_{e}$ follows a standard exponential distribution, then for $d\ge 2$,
\begin{equation}\label{eq:lowebounddiag}
\mu^{\ast} \ge \frac{\sqrt{\alpha^{2}_{\ast}-1}}{2\sqrt{d}} \geq \frac{0.3313}{\sqrt{d}},
\end{equation}
where $\alpha_{\ast}$ is the non null solution of $\coth \alpha= \alpha$. Recently, still under the assumption of exponential passage times, Martinsson \cite{Martinsson2015Arxiv} proved a matching upper bound establishing  
$$\lim_{d \to \infty} \sqrt{d} \mu^{\ast} = \frac{\sqrt{\alpha^{2}_{\ast}-1}}{2}. $$ 

\subsection{A lower bound in the diagonal direction}
We start by showing that the lower bound \eqref{eq:lowebounddiag} is also true under our setting in the large $d$ limit:
 \begin{thm} \label{thm:mustar}Suppose the edge weight distribution satisfies \eqref{eq:dist2} and $\mu^{\ast} = \mu^{\ast}_{d}$ is the time constant in the diagonal direction as defined above. Then 
 \[
 \liminf_{d\to \infty} \sqrt{d} \mu^{\ast} \ge \frac{\sqrt{\alpha_{\ast}^{2}-1}}{2a}, 
 \]
\end{thm}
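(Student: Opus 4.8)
The plan is to follow the same union bound strategy used in the proof of Proposition~\ref{prop:LB}, but now replacing the hyperplane $H_n$ by the diagonal hyperplane $\mathcal J_n$ and replacing the crude combinatorial count $\mathcal N_{k,n}$ by a directional large deviation estimate that captures the cost of moving a net displacement $n\sqrt d$ in the diagonal direction using $k$ steps. First I would fix $0<\delta<1$ and set $x = \frac{(1-\delta)\sqrt{\alpha_\ast^2-1}}{2a\sqrt d}$ (the target lower bound, up to the $1-\delta$), and let $c_n := T(0,\mathcal J_n)$. As in Lemma~\ref{lem:basiclemmaBL}, it suffices to show $\sum_n \Pro(c_n \le nx) < \infty$, and by a union bound over lattice paths $\gamma$ from $0$ reaching $\mathcal J_n$ with $k$ steps,
\[
\Pro(c_n \le nx) \le \sum_{k} \widetilde{\mathcal N}_{k,n}\,\Pro(S_k \le nx),
\]
where $\widetilde{\mathcal N}_{k,n}$ counts such paths. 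The key point is that a path with $k$ steps reaching $\mathcal J_n$ must have a directed component of size $\ge n\sqrt d$ along $(1,\dots,1)$; writing each step as $\pm e_i$, the net displacement along the diagonal is a sum of $k$ i.i.d.\ signs scaled by $1/\sqrt d$ in expectation, so by a Chernoff/exponential moment bound analogous to Lemma~\ref{lem:KestenCombinatorialBound},
\[
\widetilde{\mathcal N}_{k,n} \le (2d)^k \min\Bigl(1,\ \exp\bigl(-\rho n\sqrt d + \tfrac{k}{d}(\cosh\rho - 1)\,d^{0}\cdot(\text{const})\bigr)\Bigr)
\]
for any $\rho \ge 0$ — more precisely the exponent should be $-\rho n\sqrt d + k\log\cosh(\rho/\sqrt d)$, which for the relevant scaling $k \asymp n\sqrt d$ and $\rho \asymp 1$ is the quantity governing the constant $\alpha_\ast$.

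The heart of the matter is the optimization. I would substitute the bound from Lemma~\ref{lem:Sonice} (with $a$ replaced appropriately, valid for $d$ large), $\Pro(S_k \le nx) \le \bigl(\tfrac{(1+\delta)eanx}{k}\bigr)^k$, and write $k = z n\sqrt d$. The dominant term in the sum then takes the schematic form
\[
\Bigl[\ (\text{something})^{1/\sqrt d}\ \Bigr]^{n\sqrt d} \quad\text{or more precisely}\quad \exp\Bigl(n\sqrt d \cdot \Psi_\delta(z)\Bigr),
\]
and one needs $\sup_z \Psi_\delta(z) < 0$ for $d$ large, where after plugging in $x = \frac{(1-\delta)\sqrt{\alpha_\ast^2-1}}{2a\sqrt d}$ the function $\Psi_0(z)$ attains its maximum value exactly $0$ at the critical $z$ corresponding to $\alpha_\ast$ solving $\coth\alpha = \alpha$. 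This is the computation that pins down the constant $\sqrt{\alpha_\ast^2-1}/2$: the Legendre-type extremal problem $\min_\rho \max_z \{ -\rho z + z\log z - z + z\cdot(\cosh\rho) \cdots\}$ (the precise form coming from combining the path-count exponent with the $(c/z)^z$-type factor) has its balance point governed by the equation $\coth\alpha = \alpha$, and the value of the saddle is $\tfrac12\sqrt{\alpha_\ast^2-1}$. The strict inequality $\Psi_\delta < 0$ for $\delta > 0$ then makes the sum geometrically summable in $n$, and sending $\delta \to 0$ finishes the proof.

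The main obstacle I anticipate is getting the directional path-counting estimate sharp enough: the bound in Lemma~\ref{lem:KestenCombinatorialBound} is tuned to hitting $H_n$ (displacement $n$ along one axis, total dimension $d$), whereas here the displacement is $n\sqrt d$ spread in a balanced way over all $d$ axes, so the correct exponent is $-\rho n\sqrt d + k\log\cosh(\rho/\sqrt d)$ rather than the $\cosh\rho - 1$ form, and one must be careful that the per-coordinate ``rate'' is $\log\cosh(\rho/\sqrt d) \approx \rho^2/(2d)$ — it is precisely this $1/d$ scaling, combined with $k \asymp n\sqrt d$, that produces an $O(1)$ contribution and hence the $d$-independent constant $\alpha_\ast$. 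A second, more routine obstacle is verifying that Lemma~\ref{lem:Sonice} applies with the rescaled $x$ (which is now of order $1/\sqrt d$ rather than $\log d / d$, still tending to $0$, so the hypotheses hold for $d$ large) and that the tail $k$ large / $k$ moderate split can be handled exactly as in Proposition~\ref{prop:LB}. I would organize the write-up as: (1) reduce to summability via Lemma~\ref{lem:basiclemmaBL}; (2) state and prove the diagonal path-count lemma; (3) plug in and reduce to the scalar optimization $\sup_z \Psi_\delta(z)$; (4) identify the critical point via $\coth\alpha = \alpha$ and evaluate the saddle to get $\sqrt{\alpha_\ast^2-1}/2$; (5) conclude and send $\delta\to 0$.
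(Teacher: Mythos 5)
Your proposal is correct and follows essentially the same route as the paper: Borel--Cantelli plus a union bound over $k$-step paths to $\mathcal J_n$, the directional path count $(2d)^k\binom{k}{(k+n\sqrt d)/2}2^{-k}$ (which the paper imports as Lemma 3 of Couronn\'e--Enriquez--Gerin rather than rederiving via Chernoff), the bound $\Pro(S_k\le nx)\le((1+\delta)eanx/k)^k$ obtained by rerunning Lemma~\ref{lem:Sonice} with $x\asymp 1/\sqrt d$, and the scalar optimization in $y=k/(n\sqrt d)$ whose value $\inf_{y\ge1}(y+1)^{(y+1)/(2y)}(y-1)^{(y-1)/(2y)}=e\sqrt{\alpha_*^2-1}$ is exactly your saddle point governed by $\coth\alpha=\alpha$. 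The only slips are cosmetic (your Chernoff exponent should read $-\rho n\sqrt d+k\log\cosh\rho$, and no moderate/large-$k$ split is needed since the optimized ratio is $\le 1-\delta^2$ uniformly in $y\ge1$).
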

\begin{rem} The result also holds under \eqref{eq:distLB}, which is more general than \eqref{eq:dist2} and the proof is a slight modification of that in Section \ref{sec:lb} and \cite{MR2753301}. However, we state Theorem \ref{thm:mustar} and provide its proof under \eqref{eq:dist2} for two purposes: (i) to quantify the difference between $\mu^{\ast}$ and $\frac{\sqrt{\alpha_{\ast}^{2}-1}}{2a\sqrt{d}}$, for $d$ large, and (ii) to compare the lower bound of $\mu^{\ast}$ with the upper bound of $\mu$ for finite but large $d$, under the same set of conditions. 
\end{rem}
\begin{proof} For $\delta \in (0,1)$, we may always choose $d$ large enough such that 
\begin{equation}
\label{eq:relmustar1}
 d \ge \frac{\alpha_{\ast}^{2}-1}{4a^{2}} \max\left \{\epsilon_{0}^{-4}, e^{8C/\delta}, 1\right\}.
\end{equation}
Recall that $\epsilon_{0}$ is the right endpoint of the interval $[0, \epsilon_{0}]$ on which the distribution of $
\tau_{e}$ satisfies \eqref{eq:dist2} with constants $a$ and $C$. 
We set $x = \frac{(1-\delta) \sqrt{\alpha_{\ast}^{2}-1}}{2a\sqrt{d}}$. Note that $\sqrt{x} \in [0,  \min\{\epsilon_{0}, 1\}]$ due to \eqref{eq:relmustar1}. Then for any fixed $n \in \mathbb N$ and any $k\ge n\sqrt{d}$, we can repeat the computation before \eqref{eq:inequality1} to obtain 
\begin{equation}
\label{eq:noname}
\Pro(S_{k}\le nx) \le \left( \frac{(1+\delta) eanx}{k}\right)^{k}.
\end{equation}
To see this, one can just follow the proof of Lemma \ref{lem:Sonice} with $y = \frac{k}{n\sqrt{d}}$ and $\gamma = k/(nx\sqrt{d})$, provided \eqref{eq:inequality2} holds for our choice of $x$ and any $y \ge 1$, i.e., 
\begin{equation}
\label{eq:relmustar2}
\frac{\sqrt{2a}}{(\alpha_{\ast}^{2}-1)^{\frac{1}{4}}}d^{\frac{1}{4}} - \frac{1}{2} \log d \ge \log \frac{4}{\delta(1-\delta) \sqrt{\alpha_{\ast}^{2}-1}}.
\end{equation}
This is always possible by choosing $d$ large enough.
Next, we use the upper bound for the number $D_{k}^{(n)}$ of self-avoiding walks of length $k$ from $0$ to $\mathcal J_{n}$ from Lemma 3 of \cite{MR2753301}:
\begin{align*}
D_{k}^{(n)} &\le (2d)^{k}\frac{n\sqrt{d}}{k} \binom{k}{(k+n\sqrt{d})/2}2^{-k}\\
& \le \sqrt{\frac{1}{ n \sqrt{d}}} \left(\frac{2dy}{(y+1)^{(y+1)/(2y)} (y-1)^{(y-1)/2y}}\right)^{k}.
\end{align*}
Hence, we have
\begin{align*}
\Pro(T(\mathcal J_{n}) \le nx) &\le \sum_{k \ge n\sqrt{d}}D_{k}^{(n)}\Pro(S_{k}\le nx)\\
&\le  \sqrt{\frac{1}{ n \sqrt{d}}} \sum_{k \ge n\sqrt{d}} \left(\frac{2\sqrt{d}(1+\delta) eax}{(y+1)^{(y+1)/(2y)} (y-1)^{(y-1)/2y}}\right)^{k}\\
&\le  \sqrt{\frac{1}{ n \sqrt{d}}}  \sum_{k \ge n\sqrt{d}}\left(\frac{(1-\delta^{2})e\sqrt{\alpha_{\ast}^{2}-1}}{\inf_{y \ge 1} \{ (y+1)^{(y+1)/(2y)} (y-1)^{(y-1)/2y} \} }\right)^{k}.
\end{align*}
Note that $\inf_{y \ge 1} \{ (y+1)^{(y+1)/(2y)} (y-1)^{(y-1)/2y}\} = e\sqrt{\alpha_{\ast}^{2}-1}$, and hence $\Pro(T(\mathcal J_{n})\le nx)$ decays exponentially. The conclusion follows from the Borel-Cantelli lemma.
\end{proof}
\noindent The proof above implies the following quantitative lower bound. 
\begin{corollary} 
The bound 
\begin{equation}\label{eq:mustarlowbd}
\mu^{\ast} \ge \frac{\sqrt{\alpha_{\ast}^{2}-1}}{2a\sqrt{d}} (1-\delta)
\end{equation}
holds for all $\delta \in (0, 1)$ and $d \ge 1$ such that   \eqref{eq:relmustar1} and \eqref{eq:relmustar2} are satisfied.
\begin{proof}[Proof of Theorem \ref{thm:YellowChocobo}] Firstly, choose $d_{1}$ sufficiently large such that  \eqref{eq:relmustar1} and \eqref{eq:relmustar2} are satisfied with $\delta = 1/2$, for all $d > d_{1}$. This means $\mu^{\ast} \ge \frac{1}{2} \frac{\sqrt{\alpha_{\ast}^{2}-1}}{2a\sqrt{d}} $ for all $d > d_{1}$. Secondly, due to Theorem \ref{thm:Thisisthefirsttheorem}, we can choose $d_{2}$ large enough such that $\mu(e_{1}) \le \frac{\log d}{ad}$ for all $ d > d_{2}$. Choose $d_{3}$ large enough such that $\frac{1}{2} \frac{\sqrt{\alpha_{\ast}^{2}-1}}{2a\sqrt{d}}  > \frac{\log d}{ad}$ for all $d > d_{3}$. Putting $d_{0} = \max(d_{1}, d_{2}, d_{3})$, we have for all $d > d_{0}$,
\[
\mu(e_{1}) \le \frac{\log d}{ad} < \frac{1}{2} \frac{\sqrt{\alpha_{\ast}^{2}-1}}{2a\sqrt{d}} \le \mu^{\ast},
\]
which means the intersection of $\mathcal B$ with the line $\ell_{d} =\{\lambda(1,1,\ldots, 1): \lambda \in \mathbb R\}$ is strictly contained in the Euclidean ball $\mathsf B$. This shows that $\mathcal B \neq \mathsf B$ and $ \mathcal B \subsetneq  \mathsf C$. 

Note that to show that the limit shape is not equal to the $d$-dimensional diamond $\mathsf D$, it suffices to show that 
\begin{equation}\label{Hahahaha or najifnduifs}
\mu^{*} < \sqrt{d} \mu(e_{1}).  
\end{equation}
The term $\sqrt{d}$ appears due to our choice of normalization of the hyperplane $\mathcal J_{n}$. 
To prove \eqref{Hahahaha or najifnduifs}, construct a path $\gamma^{n} = \{\< v_{i},v_{i+1}\>\}_{i=1,\ldots,n\sqrt{d}}$ from $0$ to $\mathcal J_{n}$ such that 
\begin{enumerate}
\item  For every $i =1, \ldots, n\sqrt{d}$, the $i$-th edge $\< v_{i},v_{i+1}\>$ of $\gamma^{n}$ is always in the positive direction, that is, $v_{i+1}-v_{i} \in \{e_{1}, \ldots,e_{d}\}$. 
\item $\<v_{i},v_{i+1}\>$ is chosen as the edge with the smallest weight coming out of $v_{i}$, that is, $$\tau_{\<v_{i},v_{i+1}\>} = \min_{k=1,\ldots,d} \big\{ \tau_{ \< v_{i},v_{i}+e_{k}\>}\big\}.$$
\end{enumerate}
If we let $$Y=\min \big\{ t_{1}, \ldots, t_{d} \big\},$$ where the $t_{i}$'s are independent copies of $\tau_{e}$, we see by construction,

\begin{equation}\label{HelloSI!HowAreYOU}
\mathbb E T(\gamma^{n}) = n\sqrt{d}\, \mathbb E Y.
\end{equation}
Hence,
$$ \mu^{*} =  \lim_{n\to \infty}\frac{\mathbb E T(\mathcal J_{n})}{n} \leq \lim_{n\to \infty} \frac{\mathbb E T(\gamma^{n})}{n} = \sqrt{d}\, \E Y. $$ 
Our assumptions on the distribution of $\tau_{e}$ imply the existence of a positive constant $c$, independent of $d$, such that $\mathbb E Y \leq c/d$ for all $d\geq 2$. Indeed, by hypothesis \eqref{eq:dist2} and the fact that $a>0$, we can find $\delta, \epsilon >0$ so that $$\mathbb P(\tau_{e}>x) = 1 - \mathbb P (\tau_{e} \leq x) \leq 1 - xa (1-\epsilon) < 1$$ for all $x\in [0,\delta]$ and such that $\Pro(\tau<\delta) <1$. Choosing a costant $m >\mathbb E \tau_{e}$, we write

\begin{equation}
\begin{split}
 \E Y = \int_{0}^{\infty} \mathbb P(\tau_{e} >t)^{d} dt &\leq \int_{0}^{\delta} (1-at(1-\epsilon))^{d} dt + \int_{\delta}^{m} \mathbb P(\tau_{e} >t)^{d} dt +\int_{m}^{\infty} \mathbb P(\tau_{e} >t)^{d} dt \\
 &\leq \frac{1}{d+1}\frac{1}{a(1-\epsilon)} + (m-\delta) \mathbb P(\tau_{e} >\delta)^{d} + \bigg(\frac{\E \tau_{e}}{m}\bigg)^{d}\frac{m}{d-1},
\end{split}
\end{equation}
where in the first integral we used hypothesis \eqref{eq:dist2} and in the last one we used Markov's inequality. As $\mathbb P (\tau_{e}>\delta) <1$, we can find $c>0$ such that all three terms are bounded above by $c/d$, for all $d\geq 2$. 
 Combining this with Proposition \ref{prop:LB}, we have for any $d$ large enough 
$$  \mu^{*} \leq \frac{c}{\sqrt{d}} \leq \frac{\log d}{3a \sqrt{d}} < \sqrt{d} \mu(e_{1}),$$
proving \eqref{Hahahaha or najifnduifs}.

\end{proof} 

\end{corollary}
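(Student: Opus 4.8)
The plan is to read the Corollary off directly from the proof of Theorem~\ref{thm:mustar}, which was written so that every dependence on $d$ is explicit. Recall the structure of that proof: one fixes $\delta\in(0,1)$, sets $x=\frac{(1-\delta)\sqrt{\alpha_{\ast}^{2}-1}}{2a\sqrt{d}}$, and then, as soon as \eqref{eq:relmustar1} and \eqref{eq:relmustar2} hold, one has the Chernoff-type estimate \eqref{eq:noname} for $\Pro(S_{k}\le nx)$ together with the self-avoiding-walk count $D_{k}^{(n)}$ from \cite{MR2753301}; multiplying the two and summing over $k\ge n\sqrt{d}$ yields a geometric series with ratio $\frac{(1-\delta^{2})e\sqrt{\alpha_{\ast}^{2}-1}}{\inf_{y\ge1}\{(y+1)^{(y+1)/(2y)}(y-1)^{(y-1)/(2y)}\}}=1-\delta^{2}<1$. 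Hence $\Pro(T(\mathcal J_{n})\le nx)$ decays exponentially in $n$, so in particular $\sum_{n}\Pro(T(\mathcal J_{n})\le nx)<\infty$ under exactly the stated hypotheses on $\delta$ and $d$.

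From there I would conclude in two short steps. First, by Borel--Cantelli the summability just obtained gives that, almost surely, $T(\mathcal J_{n})>nx$ for all but finitely many $n$, so $\liminf_{n\to\infty}T(\mathcal J_{n})/n\ge x$. Second, by the definition of $\mu^{\ast}$ in Section~\ref{sec:Kuppo} one has $T(\mathcal J_{n})/n\to\mu^{\ast}$ almost surely, and therefore $\mu^{\ast}\ge x=\frac{(1-\delta)\sqrt{\alpha_{\ast}^{2}-1}}{2a\sqrt{d}}$, which is precisely \eqref{eq:mustarlowbd}.

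The one point that needs to be checked with care --- and this is where the ``main obstacle'' lies, though it is a mild one --- is that \eqref{eq:relmustar1} and \eqref{eq:relmustar2} are the complete list of hypotheses consumed in the argument above: \eqref{eq:relmustar1} is what forces $\sqrt{x}\in[0,\min\{\epsilon_{0},1\}]$, so that Lemma~\ref{lem:kesten8.8} and the computation leading to \eqref{eq:noname} are legitimate, while \eqref{eq:relmustar2} is exactly the restatement of \eqref{eq:inequality2} for the present value of $x$ and every $y\ge1$. The identity $\inf_{y\ge1}\{(y+1)^{(y+1)/(2y)}(y-1)^{(y-1)/(2y)}\}=e\sqrt{\alpha_{\ast}^{2}-1}$, which is where the root $\alpha_{\ast}$ of $\coth\alpha=\alpha$ enters, is a $d$-free calculus fact and needs no new input. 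So I do not anticipate any real difficulty: the Corollary is simply Theorem~\ref{thm:mustar} with the limit removed and the threshold on $d$ made quantitative, and all the substance was already spent in proving Theorem~\ref{thm:mustar}.
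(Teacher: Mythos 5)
Your proposal is correct and is essentially the paper's own argument: the paper introduces the Corollary with the single sentence ``The proof above implies the following quantitative lower bound,'' i.e.\ it, too, simply observes that the only places where largeness of $d$ is consumed in the proof of Theorem~\ref{thm:mustar} are \eqref{eq:relmustar1} (which makes $x$ small enough for the Chernoff estimate \eqref{eq:noname} to apply) and \eqref{eq:relmustar2} (which is \eqref{eq:inequality2} for this $x$), after which the geometric series with ratio $1-\delta^{2}$ and Borel--Cantelli give $\mu^{\ast}\ge x$. No gaps; nothing further is needed.
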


\section*{Appendix } In this section, we show how to compute a quantitative upper bound of $\mu(e_{1})$. This bound will only depend on $d$, $\mathbb E \tau_{e}$ and  $a,\epsilon_{0},C$ satisfying \eqref{eq:dist2}.  

We proceed as follows. First, slightly modifying \eqref{eq:finalcal} in Section \ref{sec:ub},  we notice that for any $A>0, \delta \in (0,1)$ and $B\neq 0$ satisfying  
\begin{equation}\label{eq:conditionforquant}
\E (N_{n, x}^{2}) \le A (\E N_{n,x})^{2} \quad \text{ and  } \quad y := \frac{B\delta \log d}{2ad} \leq \min \{\epsilon_{0}, e^{-C}\},
\end{equation}
and any $\eta >0$,
\begin{align}
\label{eq:muupbd}
\mu(e_{1}) \leq \frac{\log d}{2ad}\left(B\delta + \frac{1}{1-\delta} \right)
 + \left(1-\frac{B\delta\log d}{2Ad}\left[1-\frac{C}{|\log y|}\right]\right)^{\left \lfloor \frac{d}{\delta^{1+\eta}\log d}\right \rfloor}\E \tau_{e}. 
\end{align}
Letting $\Upsilon(A,B,\delta,\eta)$ be the right side of  \eqref{eq:muupbd}, we obtain

\begin{equation}\label{mycoffeemugissocute}
\mu(e_{{1}}) \leq \inf_{(\delta,A,B,\eta)} \Upsilon(A,B,\delta,\eta),
\end{equation}
where the infimum is taken over all $\eta >0$ and $A,B,\delta$ satisfying \eqref{eq:conditionforquant}.

As we will see, the main task here is to find any (but preferably the smallest) $A$ that satisfies $\eqref{eq:conditionforquant}$. In Section \ref{sec:kdosadkasokpojigorhughrurejioef}, we found such a number for any given $\delta \in (0,1)$ and $\eta >0$. Indeed, we saw that we can choose any $A=A(\delta, \eta)$ such that 
\begin{equation}
\label{eqn:uppA}
A \geq 1 + e^{f_{a, C}(\delta, d) g_{\eta}^{2}(\delta, d)} \left( 1+ o(p^{-1/2})\right) \sum_{l=0}^{n-1}\left(\frac{1-\delta}{1-\frac{1}{\delta^{1+\eta}\log d}}\right)^{l}+ \frac{1}{\E N_{n, x}},
\end{equation}
where upper bounds of $1/\E N_{n, x}$, $f_{a,C}(\delta, d)$, and $g_{\eta}(\delta, d)$ can be recovered from \eqref{eq:1stmom}, \eqref{eq:Aconst1} and \eqref{eq:Aconst2}, respectively, once we are given $\delta, \eta, d$ and the parameters $a, C$ in \eqref{eq:dist2} for the distribution of $\tau_{e}$. The issue now is to control the $o(p^{-1/2})$ term, where $p=d - \lfloor \frac{d}{\delta^{1+\eta}\log d}\rfloor >  0$. This will be done in the next section of the appendix. In the last section, we provide a few specific examples of these computations.

\subsection*{A refinement of Lemma \ref{lem:main}}
To get a quantitative estimate of the lower bound of $A$ using \eqref{eqn:uppA}, we need to get control of the $o(p^{-1/2})$ term.  This term comes exactly from the estimate on the  probabilities in Lemma \ref{lem:main}. We will control these probabilities by computing the combinatorics in the ``bubble'' argument more precisely. We start from the two cases defined in \eqref{eq:bubblecase1} and \eqref{eq:bubblecase2}.\br
For the first case, it is not difficult to see from the proof of Lemma 3.4(i) that,
\[
\Pro( \text{Case (i)}) \le \left( \frac{1}{2p}\right)^{l}\left \{ 1 + \frac{(n-l-1)^{2}}{p}+2p\sum_{K=2}^{l}\frac{\binom{l-1}{K-1} \left[ \binom{n-1}{K} \cdot K! \right]^{2} }{p^{K}}\right\} = \left( \frac{1}{2p}\right)^{l} \big[ 1 + (\text{I})\big],
\]
where the right side is obtained by replacing $n$ in Lemma 3.4(i) by $(n-1)$ and following the combinatorics there.\br
For the second case, we provide more detail here.  As before, we let $\tilde \gamma$ and $\tilde \gamma'$ be the paths obtained by removing the last step of $\gamma$ and $\gamma'$, respectively. We use $B_{K}(\tilde \gamma, \tilde \gamma')$ to denote the event that the overlapping edges in $\tilde \gamma$ and $\tilde \gamma'$ are clustered in $K$ segments. We compute $K=1, 2, 3$ case separately, whereas all $K \ge 4$ cases are considered together.\br
When $K = 1$ and the overlapping segments in $\gamma$ and $\gamma'$ align in the same direction, there can be either one bubble or two bubbles. The ``one-bubble'' diagram is in Figure 4.
Hence 
\begin{align*}
\Pro(\text{Case (ii)}; B_{1}(\tilde \gamma, \tilde\gamma')) &\le 2\cdot \left (\frac{1}{2p}\right )^{l-1} \left( \frac{2(n-l)-2}{2p}\right)^{2}  \\
&+  \left (\frac{1}{2p}\right )^{l-1}  \sum_{m_{1}, m_{1}'=1}^{n-l-1} \left( \frac{m_{1} + m_{1}'-2}{2p}\right)^{2} \left( \frac{2n-2l-m_{1} - m_{1}'-2}{2p}\right)^{2} \\
& + \left (\frac{1}{2p}\right )^{l-1} (n-l-1)^{2}  \left (\frac{1}{2p}\right )^{2} = \left( \frac{1}{2p}\right)^{l}\cdot (\text{II}) 
\end{align*}
When $K=2$ and there is only one bubble, both overlapping segments have to align in the same direction and attach to the start and endpoint of $\tilde \gamma$ and $\tilde \gamma'$. This diagram occurs with probability no more than 
\[
(l-2) \left (\frac{1}{2p}\right )^{l-1} \left(\frac{2n-2l-2}{2p}\right)^{2},
\]
where $(l-2)$ is the number of ways of grouping the $(l-1)$ overlapping edges into two consecutive overlapping segments, $\left(\frac{2n-2l-2}{2p}\right)^{2}$ is the factor for the middle bubble, applying \eqref{eq:fact2}. We can also estimate the probabilities for other types of alignments for $K=2$ case, which gives  
\begin{align*}
\Pro(\text{Case (ii)}; B_{2}(\tilde \gamma, \tilde\gamma')) \le& \left (\frac{1}{2p}\right )^{l-1} (l-2)\left[6\cdot \left(\frac{n-l-1}{2p}\right)^{2}  + \right. \\
& \left.   \frac{2(n-l-1)^{2}(n-l-2)^{4}}{(2p)^{4}} + \frac{4(n-1)^{2}(n-l)^{2}}{(2p)^{3}}\right]\\
= &\left( \frac{1}{2p}\right)^{l}\cdot (\text{III}). 
\end{align*}
Following the same strategy, we can compute the probabilities for the $K=3$ case
\begin{align*}
\Pro(\text{Case (ii)}; B_{3}(\tilde \gamma, \tilde\gamma'))& \le \binom{l-2}{2} \left (\frac{1}{2p}\right )^{l-1} \left [\frac{2(n-l)^{2}}{(2p)^{2}}  + \frac{8(n-1)^{4}(n-l)^{2}}{(2p)^{3}}\right] \\
&=\left( \frac{1}{2p}\right)^{l}\cdot (\text{IV}) ,
\end{align*}
and the union of all $K \ge 4$ case
\[
\sum_{K=4}^{l}\Pro(\text{Case (ii)}; B_{K}(\tilde \gamma, \tilde\gamma')) \le  \left (\frac{1}{2p}\right )^{l-1} \cdot 2p\sum_{K=4}^{l} \frac{\binom{l-2}{K-1} \left[ \binom{n-1}{K} \cdot K! \right]^{2} }{p^{K}} = \left( \frac{1}{2p}\right)^{l}\cdot (\text{V}). 
\]
Combining everything above gives a quantitative upper bound for $\Pro(\text{Case (ii)})$. The quantitative upper bound can be reduced even further by computing $\Pro(\text{Case (ii)}; B_{K}(\tilde \gamma, \tilde\gamma')) $ separately for more $K$'s, but we stop here. \br
Therefore, $o(p^{-1/2})$ is no more than summing (I) through (V).

\subsection*{Special Case: Exponential Distribution}

\subsubsection*{Limit shape is not a $d$-dimensional cube for $d\geq 269,000.$}
For the special case of $\tau_{e}$ following a standard exponential distribution, we have $a=1, C=o(1)$, and $\E \tau_e = 1$. Moreover, $\mu^{\ast} \ge \frac{\sqrt{\alpha_{\ast}^{2}-1}}{2\sqrt{d}} $ for all $d \ge 2$. To see this, one can either check \cite{MR2753301} or go through the proof of Theorem \ref{thm:mustar}, using the fact that \eqref{eq:noname} holds for $\delta = 0$.\br
To estimate $\mu(e_{1})$, instead of using Lemma \ref{lem:kesten8.8}, we can compute $\Pro(S_{n}\le x)$ explicitly in this case,
\[
\Pro(S_{n}\le x) =\int _{0}^{x} \frac{y^{n-1}e^{-y}}{(n-1)!}dy\mathdef \boldsymbol \gamma(n, x)
\]
where $\boldsymbol \gamma(n, x)$ is the cumulative distribution function of a gamma distribution with shape parameter $n$ and scale parameter $1$. 
This allows us to replace the ratio 
$
\frac{\Pro(S_{n-l}\le x)}{\Pro(S_{n}\le x)}$ by $\frac{\boldsymbol \gamma(n-l, x)}{\boldsymbol \gamma(n, x)}
$. 
We also use $\boldsymbol \gamma(n, x)$ to estimate $1/\E N_{n, x}$
\[
\frac{1}{\E N_{n,x}} = \frac{1}{ |\mathcal P_{n}|\cdot \Pro(S_{n} \le x)}\le \frac{1}{[2p-1-\log(2p-1)]^{n-1}\boldsymbol \gamma(n, x)}.
\]

Therefore, following exactly the same computation in Section \ref{sec:ub}, we obtain that we can choose any 
\begin{equation}\label{dasdasdasdasdajsiojuiorhtuihnreutghnerugbnrfiughroeugyiohrenwgfhrygfyngdi}
A \ge 1+ \frac{1}{\E N_{n, x}} + e^{2g_{\eta}(\delta, d)}  \sum_{l=1}^{n-1} \frac{\boldsymbol \gamma(n-l, x)}{\boldsymbol \gamma(n, x)}\left[ \Pro(\text{Case (i)}) + \Pro(\text{Case (ii)})\right],
\end{equation}
to obtain
\begin{equation}\label{dajsdioajdioas}
\mu(e_{1}) \le \inf_{(\delta,B,\eta)} \bigg\{ \frac{\log d}{2d}\left(B\delta + \frac{1}{1-\delta} \right)
 + \left(1-\frac{1-e^{-  \frac{B\delta \log d}{2d} }}{A}\right)^{\left \lfloor \frac{d}{\delta^{1+\eta}\log d}\right \rfloor} \bigg\}.
\end{equation}
For instance, plugging in $d=268,337, \eta = 10^{-3}, \delta=0.764$ and $B=23.85$, we can take $A = 1.20$ and $\mu(e_{1}) \le 0.000638$ and $\mu^{\ast} \ge0.000639 > \mu(e_{1})$. 
We checked that for all $d \in (268337, 10^{6})$, $  \frac{\sqrt{\alpha_{\ast}^{2}-1}}{2\sqrt{d}} $ is greater than the right side of \eqref{dajsdioajdioas}. The case $d \geq 10^{6}$ follows directly from Kesten\cite[Remark 8.5]{Aspects}. 

\subsubsection*{Limit shape is not a $d$-dimensional diamond for any $d \geq 110$.}
Recall that $Y \mathdef \min \{t_{1}, \cdots, t_{d}\}$, where $t_{i}$'s are i.i.d. copies of $\tau_{e}$.  For the standard exponential distribution we have $\mathbb E Y = d^{-1}$ in \eqref{HelloSI!HowAreYOU} so $\mu^{*} \leq d^{-1/2}$ for any $d\geq 2$. At the same time, we can take $\delta = 0$ in Lemma \ref{lem:Sonice}, which implies that the exponent of $\delta$ in \eqref{eq:almostthere} changes from $2$ to $1$. This leads that \eqref{eq:HowAreYou} and \eqref{Havetoaddthisonetoo}
are summable for any $\delta \in (0,1)$ and all $d$ such that 
\begin{equation}\label{Thismustbethelastequationinthepaper!!}
e^{2} d^{-\delta} (1-\delta)\log d < 1,
\end{equation}
which gives, by Lemma \ref{lem:basiclemmaBL},  the bound $\sqrt{d}\mu(e_{1}) \geq \frac{(1-\delta)\log d}{2\sqrt{d}}$ for this choice of $\delta$ and $d$. This implies that the limit shape is not a $d$-dimensional cube (as $\mu^{*} < \sqrt d \mu(e_{1})$) for any $d$ that satisfies $$ d > \exp\bigg( \frac{2}{1-\delta}\bigg)$$ and 
\eqref{Thismustbethelastequationinthepaper!!}. Choosing $\delta=(2+\log 2)/(4+\log 2)$ we see that the equations above are satisfied for any $d \geq 110$.

If $\tau_{e}$ is not exponentially distributed and we only have $\mathbb E Y \le d^{-1}$, for instance, when $\tau_{e} \sim \mathcal U[0, 1]$, we must keep the exponent of $\delta$ in \eqref{eq:almostthere} equal to $2$ and the choice of $\delta =0.669$ excludes the $d$-dimensional diamond for $d \geq 416$.

\bibliographystyle{amsplain}
\bibliography{notes_ref}
\vspace{1cm}
\end{document}